\newtheorem{defn}{Definition}[section]
\newtheorem{prop}{Proposition}[section]
\newtheorem{thm}{Theorem}[section]
\newtheorem{lem}{Lemma}[section]
\newtheorem{Example}{Example}[section]
\numberwithin{equation}{section}
\begin{document}

\title{On the topological entropy of the set with a special shadowing time
 \footnotetext {* Corresponding author}
  \footnotetext {MSC classes:28A80, 28D05, 11K55}}
\author{Cao Zhao$^{1}$ and Ercai Chen$^{*1,2}$\\
  \small   1 School of Mathematical Sciences and Institute of Mathematics, Nanjing Normal University,\\
   \small   Nanjing 210023, Jiangsu, P.R.China\\
    \small 2 Center of Nonlinear Science, Nanjing University,\\
     \small   Nanjing 210093, Jiangsu, P.R.China.\\
      \small    e-mail: izhaocao@126.com,  ecchen@njnu.edu.cn.
}
\date{}
\maketitle

\begin{center}
 \begin{minipage}{120mm}
{\small {\bf Abstract.}
Let  $(X,d,T )$ be  a topological dynamical system with specification property.
For  $ \alpha\in  \mathbb R^+$ and any $x_0\in X$, define
 $$ \mathbf D^{x_0}_\alpha  :=\Big\{x\in X:   \lim\limits_{\epsilon\to 0}\limsup\limits_{n\to\infty}\dfrac{\max\{t\in\mathbb N:~T^n(x)\in B_{t}( x_0,\epsilon)\}}{n}\geq \alpha\Big\}.$$
    Then we have
 $h_{top}^{B}  ( T, \mathbf D^{x_0}_\alpha )=  \dfrac{h_{top}(T)}{1+\alpha} $, where $h_{top}^{B}  ( T, \mathbf D^{x_0}_\alpha  )$ denotes the Bowen topological entropy of  $\mathbf D^{x_0}_\alpha.$}
\end{minipage}
 \end{center}

\vskip0.5cm {\small{\bf Keywords and phrases:}     shadowing time, topological entropy, topological pressure.}\vskip0.5cm
%%%%%%%%%%%%%%%%%%%%%%%%%%%%%%%%%%%%%%%%%%%%%%%%%%%%%%%%%%%%%%%%%%%%%%%%%%%%%%%%
%%%%%%%%%%%%%%%%%%%%%%%%%%%%%%%%%%%%%%%%%%%%%%%%%%%%%%%%%%%%%%%%%%%%%%%%%%%%%%%%
%%%%%%%%%%%%%%%%%%%%%%%%%%%%%%%%%%%%%%%%%%%%%%%%%%%%%%%%%%%%%%%%%%%%%%%%%%%%%%%%
%%%%%%%%%%%%%%%%%%%%%%%%%%%%%%%%%%%%%%%%%%%%%%%%%%%%%%%%%%%%%%%%%%%%%%%%%%%%%%%%
\section{Introduction}

Part of number theory is concerned with finding rational numbers $\dfrac{p}{q}$ whcih are good approximations to a real number $x$. For any $x$ one can find infinitely many $\dfrac{p}{q}$ whose distance from $x$ is less than $\dfrac{1}{q^2}.$ If one can find infinitely many $\dfrac{p}{q}$ whose
distance from $x$ is less than $\dfrac{1}{q^{\tau}}$ with $\tau>2$ then $x$ is a $\tau$-well approximable number.

Let $\Psi: \mathbb R^+\to \mathbb R^+$ be a decreasing function such that $x\mapsto x^2\Psi(x)$ is nonincreasing. In 1924, Khintchine\cite{Khi} used the theory of continued fraction to prove that the set of $\Psi$-approximable real numbers 
$$\mathcal{K}(\Psi):= \left\{ \left|\xi -\frac{p}{q}\right|<\Psi(q)~\text{for infinitely many rational numbers}~\frac{p}{q}\right\}$$
have Lebesgue meausure zero if the sum $\sum_{x\geq 1}x\Psi(x)$ converges and has full Lebesgue measure otherwise.  Furthermore,  this result has been refined by 
Jarn\'ik \cite{Jar} and, independently, Besicovitch \cite{Bes}, who established that, for any real number $\tau\geq 1$, the Hasudorff dimension of the set 
$\mathcal{K}(\tau):= \mathcal{K}(x\mapsto x^{-2\tau})$ is equal to $1/\tau.$
Moreover, in 2003, Bugaud \cite{Bug} make the precise research on the exact approximation by adjusting the subset   $\mathcal{K}(\Psi)$. 
In \cite{Hil, Hil1,Hil2}, Hill and Velani introduced the shrinking target problems sourced from number theory and gave quantitative research  of the recurrence.
Let $T:\mathbf J\to \mathbf J$ be an expanding rational map of the Riemann sphere acting on
its Julia set $\mathbf J$ and $f : \mathbf J \to  \mathbb R $ denote  a H\"{o}lder continuous function satisfying $f(x)\geq
\log |T^{'}(x)|$ for all $x\in \mathbf J$. For any $z_{0}\in \mathbf J$,  Hill and Velani \cite{Hil} studied the set
of "well approximable" points
$$D_{z_{0}}(f) := \{x\in \mathbf J  : d(y, x) < e^{-S_{n}f(y)}  ~\text{for infinitely many pairs }~ (y, n)\in \mathbf I\},$$
where $\mathbf I = \mathbf I(z_{0}) $ denotes the set of pairs $(y, n)(n\in \mathbb N)$ such that $T^{n}y=z_{0}$ and $S_{n}f(y)=\sum_{i=0}^{n-1}f(T^{i}y)$. In fact,
they gave the following result.

{\bf Theorem A}
The set $D_{z_{0}}(f)$
  has Hausdorff dimension $s(f)$, where $s(f)$ is the unique
solution to the pressure equation
$$P(-s f )= 0.$$
In \cite{TanWan}, Tan and Wang investigated metric properties as well as estimates on the Hausdorff dimension of the recurrence set for $\beta$-transformation dynamical systems. More precisely,
 the $\beta$-transformation $T_{\beta}:[0,1]\to [0,1]$ is defined by $T_{\beta}x=\beta x- \lfloor  \beta x\rfloor $  for all $x\in [0,1]$. The spotlight is on the size of the set
$$ \{x \in [0, 1] : d(T_{\beta}^{n}x, x)<\psi(n) ~\text{for infinitely many}~ n\},  $$
where $\psi(n)$ is a positive function. In fact, this has evoked a rich subsequent literature on the so-called Diophantine approximation. We refer the reader to \cite{Wan, Fan} for the related work about this set.

Inspired by the research  on  "recurrence behaviour" or "shrinking behaviour" in dynamical systems. In this paper, we define a class sets with a special shadowing behaviour.  Let $(X, d, T)$ be a topological dynamical system, where $(X,d)$ is a compact metric space and $T:X\to X$ is a continuous map. 
For $n\in \mathbb{N}$, the Bowen metric $d_n$ on $X$  is defined by
\begin{align*}
d_n(x,y):=\max_{0\leq i\leq n-1}d(T^i(x),T^i(y)).
\end{align*}
For every $\epsilon>0$, $n\in \mathbb{N}$ and a point $x\in X$, define the Bowen ball
$$B_n(  x,\epsilon):=\Big\{y\in X: d_{n}(x, y)<\epsilon\Big\}.$$
 Let   $\alpha\in \mathbb R^+$ and $x_0\in X$  we define the level set
 $$ \mathbf D^{x_0}_\alpha  :=\Big\{x\in X:   \lim\limits_{\epsilon\to 0}\limsup\limits_{n\to\infty}\dfrac{\max\{t\in\mathbb N:~T^n(x)\in B_{t}( x_0,\epsilon)\}}{n}\geq \alpha\Big\} .$$

Now we state our main results as follows. (See Section \ref{sep} for precise definitions.)
\begin{thm}\label{main0}
Let  $T: X\to X$ be a continuous map with specification property. For any
 $\alpha\in \mathbb R^+$ and $x_0\in X$,
$$h_{top}^B  (  \mathbf D^{x_0}_\alpha   )=  \dfrac{h_{top}(T)}{1+\alpha} ,$$
where $h_{top}^B  ( \mathbf D^{x_0}_\alpha   )$ denotes the Bowen topological entropy of the subset $\mathbf D^{x_0}_\alpha .$
\end{thm}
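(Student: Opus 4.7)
The heuristic behind the formula $h_{top}(T)/(1+\alpha)$ is this: for $x\in\mathbf D^{x_0}_\alpha$, along infinitely many scales $n$ the orbit piece $(T^{n+j}x)_{j<\alpha n}$ is $\epsilon$-pinned to the known orbit $(T^jx_0)_{j<\alpha n}$, so $(1+\alpha)n$ Bowen-steps of $x$ carry only the complexity of an arbitrary orbit segment of length $n$. I will prove the two matching inequalities separately; the upper bound does not use specification, while the lower bound uses it in a Moran-type construction.

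\emph{Upper bound.} Fix $\eta,\delta>0$, set $L_n=\lfloor(\alpha-\delta)n\rfloor$ and $A_n=T^{-n}(B_{L_n}(x_0,\eta))$. The definition gives $\mathbf D^{x_0}_\alpha\subset\bigcup_{n\ge N}A_n$ for every $N$. Take an $(n,\eta)$-spanning set $E_n$ with $|E_n|\le e^{(h_{top}(T)+\delta)n}$. For each $y\in E_n$ with $B_n(y,\eta)\cap A_n\ne\emptyset$ pick any $w_{y,n}$ in this intersection; any two such points $w,w'$ satisfy $d_{n+L_n}(w,w')<2\eta$ because they $\eta$-track $y$ on $[0,n)$ and $x_0$ on $[n,n+L_n)$, so $B_n(y,\eta)\cap A_n\subset B_{n+L_n}(w_{y,n},2\eta)$. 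Hence
\[
\mathcal M(\mathbf D^{x_0}_\alpha,s,N,2\eta)\le\sum_{n\ge N}e^{(h_{top}(T)+\delta)n-s(n+L_n)},
\]
which tends to $0$ whenever $s(1+\alpha-\delta)>h_{top}(T)+\delta$. Letting $N\to\infty$, $\eta\to 0$, $\delta\to 0$ yields $h_{top}^B(\mathbf D^{x_0}_\alpha)\le h_{top}(T)/(1+\alpha)$.

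\emph{Lower bound.} Build a Moran subset $F\subset\mathbf D^{x_0}_\alpha$ using specification. Fix $\delta>0$, tolerances $\eta_k\searrow 0$ with gap constants $M_k=M(\eta_k)$, a rapidly increasing sequence $n_k$, and maximal $(n_k,4\eta_k)$-separated sets $E_{n_k}$ with $|E_{n_k}|\ge e^{(h_{top}(T)-\delta)n_k}$. Set $N_0=0$ and $N_k=N_{k-1}+M_k+n_k+M_k+s_k$ where $s_k$ is the least integer with $s_k\ge\alpha(N_{k-1}+M_k+n_k+M_k)$; rapid growth of $n_k$ forces $s_k/n_k\to\alpha$ and $\sum_{j\le k}n_j/N_k\to 1/(1+\alpha)$. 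For every $\bar y=(y_k)\in\prod_kE_{n_k}$ specification produces a point $w_{\bar y}$ whose orbit $\eta_k$-shadows $y_k$ on the $k$-th free block $[N_{k-1}+M_k,N_{k-1}+M_k+n_k)$ and $\eta_k$-shadows the shifted $x_0$-orbit of length $s_k$ on $[N_{k-1}+M_k+n_k+M_k,N_k)$. Let $F$ be the set of limit points. Since $\eta_k\to 0$, for every $\epsilon>0$ eventually $\eta_k<\epsilon$, so each $x\in F$ has $T^{N_{k-1}+M_k+n_k+M_k}x\in B_{s_k}(x_0,\epsilon)$ for large $k$, and the choice of $s_k$ delivers the required $\limsup\ge\alpha$, giving $F\subset\mathbf D^{x_0}_\alpha$.

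\emph{Measure estimate and conclusion.} Place the Bernoulli product $\mu$ on $F$ (uniform on each $E_{n_k}$). Separation of $E_{n_k}$ together with the shadowing tolerance $\eta_k$ makes the Bowen balls $B_{N_k}(\cdot,\eta_k)$ distinguish branches, so
\[
\mu(B_{N_k}(x,\eta_k))\le\prod_{j\le k}|E_{n_j}|^{-1}\le\exp\!\Bigl(-(h_{top}(T)-\delta)\textstyle\sum_{j\le k}n_j\Bigr).
\]
Decomposing a general $n\in[N_{k-1},N_k]$ into completed blocks plus a tail yields $\mu(B_n(x,\eta_k))\le\exp(-(h_{top}(T)-\delta)(1-o(1))n/(1+\alpha))$ as $k\to\infty$. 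The mass-distribution principle (Bowen-entropy version) then gives $h_{top}^B(F,\eta_k)\ge(h_{top}(T)-\delta)/(1+\alpha)-o(1)$; letting $k\to\infty$ then $\delta\to 0$ delivers the lower bound. The principal obstacle is this interpolated measure estimate inside free blocks, where the separation of $E_{n_k}$ directly controls only Bowen balls of length exactly $n_k$; the accumulated gaps $\sum M_k$ and the shift $M_k$ flanking the prescribed $x_0$-segment must be absorbed by the rapid growth of $n_k$, which is the main bookkeeping task in the concrete choice of $n_k$, $s_k$, and $\eta_k$.
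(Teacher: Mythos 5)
Your upper bound is correct and is essentially the paper's argument specialized to $f\equiv\alpha$: cover $\mathbf D^{x_0}_\alpha$ by sets of points that $\eta$-track a spanning point for $n$ steps and then $x_0$ for about $\alpha n$ steps, and observe that such a set sits inside a single Bowen ball of length roughly $(1+\alpha)n$ (you do this with a doubled radius around a chosen point, the paper with its equivalent ``generalized ball'' pressure $\widehat P$; the difference is cosmetic).

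The lower bound, however, has a genuine gap, and it is not the block-length bookkeeping you flag at the end. You take the separated sets $E_{n_k}$ at scale $4\eta_k$ with $\eta_k\searrow 0$, prove the measure decay only for Bowen balls $B_n(\cdot,\eta_k)$ with $n\in[N_{k-1},N_k]$, and then invoke the entropy distribution principle ``at scale $\eta_k$'' with an $o(1)$ error. The principle (Proposition 3.1 in the paper) requires a \emph{single fixed} $\epsilon$ and the decay $\mu(B_n(x,\epsilon))\le K e^{-sn}$ for \emph{all} sufficiently large $n$, because $h_{top}^B(F,\epsilon)$ is computed from covers by arbitrarily long Bowen balls at that one scale. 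In your construction this hypothesis fails at every fixed scale: for $n$ lying in a block of level $j>k_0$, a ball of radius $\eta_{k_0}$ is much coarser than the separation $4\eta_j$ of $E_{n_j}$, so it can capture many level-$j$ branches and the measure of $B_n(x,\eta_{k_0})$ stops decaying beyond level $k_0$; consequently the claimed bound $h_{top}^B(F,\eta_k)\ge (h_{top}(T)-\delta)/(1+\alpha)-o(1)$ is not justified. The fix is structural and is exactly what the paper does: fix one separation scale once and for all (choose $\epsilon$ so that $(n_k,6\epsilon)$-separated sets already have nearly maximal exponential size, e.g.\ via Katok's formula for an ergodic measure with entropy close to $h_{top}(T)$, or for the entropy case directly from the definition of $h_{top}$), and let only the specification gluing tolerances shrink geometrically, $\epsilon_k=\epsilon/2^k$. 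Then distinct branches stay $3\epsilon$-separated in the relevant Bowen metrics at every level (the paper's Lemmas 3.3--3.4 and 3.7), the decay estimate holds for every ball $B_n(\cdot,\epsilon/2)$ with $n$ large, and the entropy distribution principle applies at the fixed scale $\epsilon/2$; the shrinking tolerances still force $F\subset\mathbf D^{x_0}_\alpha$ at every $\epsilon>0$, which is the only reason you needed $\eta_k\to 0$ in the first place.
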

In fact, in this paper we give a more general result than the above theorem.
For any positive continuous map $f: X\to \mathbb R$,  and $x_{0}\in X,$  we define
$$ \mathbf D^{x_0}_f :=\Big\{x\in X:   \lim\limits_{\epsilon\to 0}\limsup\limits_{n\to\infty}\dfrac{\max\{t\in\mathbb N:~T^n(x)\in B_{t}( x_0,\epsilon)\}-S_nf(x)}{n}\geq 0\Big\} .$$
\begin{thm}\label{main}
Let  $T: X\to X$ be a continuous map with specification property.   Let $f : X \to \mathbb R^+$ be a positive continuous function.  For each $x_{0}\in X,$ we have the topological entropy of the subset $\mathbf D^{x_0}_f ,$
 $$h_{top}^B  (T, \mathbf D^{x_0}_f  )=\sup   \Big\{\dfrac{h_{\mu}(T)}{1+    \int f d\mu}:\mu\in M(X, T)\Big\},$$
 which is the solution of the pressure equation $P(-s(f+1))=0.$
\end{thm}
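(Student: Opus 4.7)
The strategy is to prove two matching inequalities $h_{top}^B(T,\mathbf{D}^{x_0}_f) \leq s^*$ and $h_{top}^B(T,\mathbf{D}^{x_0}_f) \geq s^*$, where $s^* := \sup\{h_\mu(T)/(1+\int f\,d\mu) : \mu \in M(X,T)\}$, and then to observe that $s^*$ is the unique root of $P(-s(f+1))=0$. The latter is immediate from the variational principle: $P(-s(f+1)) = \sup_\mu\{h_\mu(T) - s(1+\int f\,d\mu)\}$ is continuous in $s$ and, because $f+1\geq 1$, strictly decreasing in $s$, starts at $h_{top}(T)>0$ when $s=0$, and vanishes exactly at $s^*$.

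For the upper bound I would use a concatenated Bowen ball cover whose weighted sum matches the pressure partition function $Z_n(-s(f+1),\epsilon)$. Fix small $\epsilon,\delta>0$. Every $x\in \mathbf{D}^{x_0}_f$ satisfies, for infinitely many $n$, a shadowing inequality $T^n x \in B_t(x_0,\epsilon)$ with $t\geq S_n f(x)-n\delta$. Pick a maximal $(n,\epsilon)$-separated set $E_n$; by uniform continuity of $f$ one may replace $S_n f(x)$ by $S_n f(y)$ up to an additional $n\delta$ error for some nearby $y\in E_n$, and the conjunction ``$x\in B_n(y,2\epsilon)$ and $T^n x\in B_{t(y)}(x_0,\epsilon)$'' places $x$ in a Bowen $(n+t(y),3\epsilon)$-ball, where $t(y):=\lfloor S_n f(y)-2n\delta\rfloor$. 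The resulting Carath\'eodory sum at scale $s$ is then bounded by
\begin{equation*}
\sum_{n\geq N} e^{2sn\delta} \sum_{y\in E_n} e^{-s(n+S_n f(y))} \;\leq\; \sum_{n\geq N} e^{2sn\delta}\, Z_n\bigl(-s(f+1),\epsilon\bigr).
\end{equation*}
For $s>s^*$ one has $P(-s(f+1))<0$, so choosing $\delta$ small enough that $P(-s(f+1))+2s\delta<0$ makes the tail vanish as $N\to\infty$, giving $h_{top}^B(T,\mathbf{D}^{x_0}_f)\leq s^*$.

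For the lower bound I would fix an ergodic $\mu\in M(X,T)$ and build a Moran-like Cantor set $K\subseteq \mathbf{D}^{x_0}_f$ with $h_{top}^B(T,K)\geq h_\mu(T)/(1+\int f\,d\mu)$; density of ergodic measures in $M(X,T)$ under specification then supplies the supremum. Choose sequences $a_k\to\infty$ growing so fast that $a_k$ dominates $\sum_{j<k}(a_j+b_j+g_j)$, set $b_k:=\lceil a_k(\int f\,d\mu+\delta)\rceil$, and let $g_k$ be the specification gap. By Katok's entropy formula, select an $(a_k,\epsilon_k)$-separated set $Y_k$ of $\mu$-generic orbit pieces of length $a_k$ with $|Y_k|\gtrsim e^{a_k(h_\mu(T)-\delta)}$. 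Using specification, glue each branch in $Y_1\times Y_2\times\cdots$ to shadow segments of $x_0$ of length $b_k$ separated by gaps $g_k$, producing an orbit $x$; genericity of the $k$-th block gives $S_{n_k}f(x)\approx n_k\int f\,d\mu$ at the test time $n_k$ ending the $k$-th generic block, while the ensuing shadow of length $b_k\gtrsim n_k\int f\,d\mu$ verifies the defining shadowing inequality, placing $x$ in $\mathbf{D}^{x_0}_f$. Placing a uniform Bernoulli measure $\nu$ on branches of $K$ and applying a Frostman-type mass distribution estimate (in the spirit of Pfister--Sullivan or Takens--Verbitskiy) yields $h_{top}^B(T,K)\geq (h_\mu(T)-\delta)/(1+\int f\,d\mu+\delta)$; letting $\delta\downarrow 0$ and $\mu$ vary finishes the lower bound.

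The principal difficulty lies in the lower bound: coordinating $(a_k,b_k,g_k,\epsilon_k)$ to simultaneously preserve $\mu$-typicality of every generic block uniformly across the tree, absorb the unknown contributions of shadow and gap segments into $S_{n_k}f(x)$, and maintain enough Bowen separation at every order $n$ (not merely at the test times $n_k$) so that the Frostman estimate $\nu(B_n(x,\epsilon))\lesssim e^{-sn}$ holds with the correct exponent. The upper bound is essentially a routine pressure-decay calculation once one recognises the concatenated Bowen cover as the natural gauge for the potential $-s(f+1)$.
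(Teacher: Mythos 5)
Your upper bound is essentially the paper's argument: fix $s>s^*$ so that $P(-s(f+1))<0$, cover $\mathbf D^{x_0}_f$ by sets of the form $B_n(y,\cdot)\cap T^{-n}B_{t(y)}(x_0,\cdot)$ with $t(y)\approx S_nf(y)$, view these as (concatenated) Bowen balls of length $n+t(y)$, and kill the tail of the Carath\'eodory sum by the pressure decay; the paper does exactly this, using its Proposition~2.1 ($P=\widehat P$) to legitimize the concatenated covers. That half, together with your variational-principle identification of $s^*$ as the root of $P(-s(f+1))=0$, is sound.

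The lower bound, however, has a genuine gap, and it is precisely at the point you flag as ``the principal difficulty.'' Your parameter choice --- one single generic block of length $a_k$ per level, with $a_k$ dominating $\sum_{j<k}(a_j+b_j+g_j)$ --- is what makes membership in $\mathbf D^{x_0}_f$ easy, but it destroys the uniform Frostman estimate $\nu(B_n(x,\epsilon))\lesssim e^{-sn}$ that the Entropy Distribution Principle requires for \emph{all} large $n$, not only at the test times $n_k$. Inside the $(k+1)$-st block the set $Y_{k+1}$ is only $(a_{k+1},\epsilon)$-separated over the full length $a_{k+1}$: two branches may coincide for the first half of the block (in a full shift one can literally take $Y_{k+1}$ to consist of words that agree on their first half), so for $n$ near the middle of the block the ball $B_n(x,\epsilon)$ can capture an entire level-$k$ cylinder, whose $\nu$-measure is only about $e^{-\sum_{j\le k}a_j(h_\mu-\delta)}$, while $n\approx a_{k+1}/2$ is enormously larger than $\sum_{j\le k}a_j$; the local exponent then collapses to $0$ (and in the adversarial example the resulting Cantor set really does have small Bowen entropy, so this is not a removable proof artifact). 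The same problem occurs immediately after each long shadow segment. The paper's construction is designed around this: each level consists of $N_{k+1}$ \emph{repetitions} of short generic blocks of fixed length $n_{k+1}$ drawn from the $(n_{k+1},6\epsilon)$-separated set $\mathcal S_{k+1}$, with the balancing conditions (3.8)--(3.11) (in particular $n_{k+1}/N_k\to0$ and $(l_k+R_k)\|f\|/l_{k+1}<\eta/4$), so branching accumulates gradually and the intermediate-time estimate is carried out block by block (the paper's ``Case 2'' lemma, counting the $j$ completed sub-blocks via the factor $\prod_{l\le j}\exp S_{n_{k+1}}(f+1)(x_l^{k+1})M_{k+1}^{N_{k+1}-j}$); moreover the shadow lengths $R_k$ are tied to the Birkhoff sum of $f$ over the whole past orbit, which is what lets the weights $\exp S_{n_i}(1+f)$ turn the measure estimate into exactly the exponent $C(1+\int f\,d\mu)$ per unit time. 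Without this repetition structure (or some substitute guaranteeing separation at all intermediate times), your mass-distribution step does not go through, so the proposal as written does not establish the lower bound.
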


\section{Preliminaries}\label{sep}
Let $(X, d, T)$ be a topological dynamical system, where $(X,d)$ is a compact metric space and $T:X\to X$ is a continuous map. The set $M(X)$ of all Borel probability measures is compact under the weak$^*$ topology. Denote by $M(X,T)\subset M(X)$ the subset of all $T$-invariant measures and $E(X,T)\subset M(X, T)$ the subset of all ergodic measures. 
Let $C(X)$ denote the space of continuous function from $X$ to $\mathbb R$. Let $S_{n}\varphi(x):= \sum_{i=0}^{n-1}\varphi(T^i(x))$
and for $c>0$, let
$$\text{Var}(\varphi, c):=\sup\{|\varphi(x)-\varphi(y)|:~d(x, y)<c\}.$$
  Given $\epsilon>0$ and $n\in \mathbb N$, we say that a set $E\subset X$ is $(n, \epsilon)$-spanning set, if for each $x\in X$ there exists $a\in E$ such that $d(T^ix, T^ia)<\epsilon$ for every $0\leq i\leq n-1$. In other words,
  $$X\subset \bigcup_{a\in E}B_{n}(a, \epsilon).$$
We also introduce  a dual notion of the spanning set. Given $\epsilon>0$ and $n\in \mathbb N$, we say that a set $E\subset X$ is $(n, \epsilon)$-separated if, given $x, y\in E$, there exists $0\leq j\leq n-1$ such that $d(T^jx, T^jy)\geq \epsilon$. In other words, if $x\in E$ then $B_{n}(x, \epsilon)$ contains no other point of $E$.
In the following, we give the definition of classical topological pressure.
\begin{defn}
For any $\varphi\in C(X), n\geq 1$ and $\epsilon>0$ put
$$Q_{n}(T, \varphi, \epsilon)=\inf\Big\{\sum_{x\in F}\exp \Big(\inf_{y\in B_{n}(x, \epsilon)} S_n\varphi(y)\Big)~: F~\text{is a }~ (n ,\epsilon)~\text{spanning set for }~X\Big\}.$$
Put $Q(T, \varphi, \epsilon)=\limsup\limits_{n\to \infty}\frac{1}{n}\log Q_n(T, \varphi, \epsilon).$
We define the topological pressure $$P(\varphi)=\lim\limits_{\epsilon\to 0}Q(T, \varphi, \epsilon).$$
If $\varphi=0$, we define the topological entropy $h_{top}(T)=P(0)$.
\end{defn}
 Next we give the definition of topological pressure as a characteristic of dimension type, due to Pesin and  Pitskel \cite{PesPit}. The definition generalises Bowen's definition of topological entropy for non-compact sets in \cite{Bow}.
\begin{defn}{\rm (\cite{PesPit})}
Let $Z\subset X$ be an arbitrary Borel set, not necessarily compact or invariant. Firstly,  we consider finite and countable collections of the form $\Gamma=\{B_{n_i}(x_i, \epsilon)\}_i$. For $s\in \mathbb R$ and $\varphi\in C(X)$, we define the following quantities:
$$M(\varphi, Z, s, \Gamma  )=\sum_{B_{n_i}(x_i, \epsilon)\in \Gamma}\exp\Big(-sn_i+\sup\limits_{x\in B_{n_i}(x_i, \epsilon)}\sum_{k=0}^{n_i-1}\varphi(T^k(x))\Big)$$
$$M(\varphi, Z, s, \epsilon, N  )=\inf_{\Gamma}M(\varphi, Z, s, \Gamma  )$$
where the infimum is taken over all finite or countable collections of the form $\Gamma=\{B_{n_i}(x_i, \epsilon)\}_i$ with $x_i\in X$ such that $\Gamma$ covers $Z$ and $n_i\geq N$ for all $i=1, 2,\cdots$. Define
$$ M(\varphi, Z, s, \epsilon  )=\lim\limits_{N\to\infty}M(\varphi, Z, s, \epsilon, N ).$$
The existence of the limit is guaranteed since the function $M(\varphi, Z, s, \epsilon, N  )$ does not decrease with $N$. By standard techniques, we can show the existence of
$$P (\varphi,Z, \epsilon):=\inf\{s: M(\varphi, Z, s, \epsilon  )=0\}=\sup\{s:M(\varphi, Z, s, \epsilon )=\infty\}.$$
Finally, define $$P ( \varphi,Z):=\lim\limits_{\epsilon\to 0}P (\varphi, Z,  \epsilon).$$
If $\varphi=0$, we define the Bowen topological entropy $h_{top}^B(  Z)=P ( 0,Z) $ with $h_{top}^B(  Z, \epsilon)=P ( 0,Z, \epsilon) $.
\end{defn}
In the end of this section,  we give an alternative formulation of Pesin and Pitskel's topological pressure. We generalize  the definition of Bowen balls.
For any $x_0, x_1, \cdots, x_{n-1}\in X$ and $\epsilon>0$, we define  $$B(x_0, x_1, \cdots, x_{n-1}; \epsilon):=\{x\in X: d(T^ix, x_i)<\epsilon, 0\leq i\leq n-1\}.$$

 \begin{defn}\label{lllll}
Let $Z\subset X$ be arbitrary Borel set, not necessarily compact or invariant. We consider finite and countable collections of the form $\Lambda=\{B(x_0, x_1, \cdots, x_{n_i-1}; \epsilon)\}_i$. For $s\in \mathbb R$ and $\varphi\in C(X)$, we define the following quantities:
$$\widehat{M}( \varphi, Z, s, \Lambda )=\sum_{B(x_0, x_1, \cdots, x_{n_i-1}; \epsilon)\in \Lambda}\exp\Big(-sn_i+\sup\limits_{x\in B(x_0, x_1, \cdots, x_{n_i-1}; \epsilon)}\sum_{k=0}^{n_i-1}\varphi(T^k(x))\Big)$$
$$\widehat{M}( \varphi, Z, s, \epsilon, N )=\inf_{\Lambda}\widehat{M}( \varphi, Z, s, \Lambda ),$$
where the infimum is taken over all finite or countable collections of the form $\Lambda=\{B(x_0, x_1, \cdots, x_{n_i-1}; \epsilon)\}_i$ with $x_i\in X$ such that $\Lambda$ covers $Z$ and $n_i\geq N$ for all $i=0, 1,\cdots$. Define
$$\widehat{M}( \varphi, Z, s, \epsilon )=\lim\limits_{N\to\infty}\widehat{M}( \varphi, Z, s, \epsilon, N ).$$
The existence of the limit is guaranteed since the function $\widehat{M}( \varphi, Z, s, \epsilon, N )$ does not decrease with $N$. By standard techniques, we can show the existence of
$$\widehat{P} (  \varphi, Z, \epsilon):=\inf\{s:\widehat{M}( \varphi, Z, s, \epsilon  )=0\}=\sup\{s:\widehat{M}( \varphi,Z, s, \epsilon )=\infty\}.$$
Finally, define $$\widehat{P} (  \varphi, Z   ):=\lim\limits_{\epsilon\to 0}\widehat{P} (  \varphi, Z, \epsilon).$$
\end{defn}
 \begin{prop}
 For any $\varphi \in C(X)$, we have
 $$P (  \varphi, Z) =\widehat{P} (  \varphi, Z).$$
 \begin{proof}
 Assume that $\Gamma=\{B_{n_i}(x_i, \epsilon)\}_i$ covers $Z$, we define   $\Lambda(\Gamma):=\{B(x_0, x_1, \cdots, x_{n_i-1}; \epsilon)\}_i$ with $x_0=\cdots =x_{n_i-1}=x_i.$
 Then $\widehat{M}(\varphi, Z, s, \epsilon, N  )\leq  M( \varphi, Z, s, \epsilon, N ) $ for any $   s>0, \epsilon>0$ and $N\in \mathbb N.$ Hence, $\widehat{P} (\varphi, Z) \leq P (\varphi, Z).$ Next we prove
 $$\widehat{P} (\varphi, Z) \geq P (\varphi, Z).$$
 Fix $t>\widehat{P} (\varphi,Z)$. We set $2\epsilon_0:=t-\widehat{P} (\varphi, Z)$. Choose $\epsilon$   small enough such that
 $$\text{Var}(\varphi,  4\epsilon)\leq \epsilon_0.$$
 There exists $N>0$ and a
 finite and countable collections of the form $$\Lambda=\{B(x_0, x_1, \cdots, x_{n_i-1}; \epsilon)\}_i$$   with  $n_i\geq N$, $x_0, x_1, \cdots, x_{n_i-1}\in X $ and $\Lambda$ covers $Z$ such that
\begin{align}\label{defeq}
\begin{split}&\widehat{M}(\varphi, Z, t-\epsilon_0, \Lambda  )\\
\leq &\sum_{B(x_0, x_1, \cdots, x_{n_i-1}; \epsilon)\in \Lambda}\exp\Big(-(t-\epsilon_0)n_i+\sup\limits_{x\in B(x_0, x_1, \cdots, x_{n_i-1}; \epsilon)} S_{n_i}\varphi(x)\Big) \\
\leq& 1.
\end{split}
\end{align}
For each $B(x_0, x_1, \cdots, x_{n_i-1}; \epsilon)\in \Lambda$, we choose $y_i \in B(x_0, x_1, \cdots, x_{n_i-1}; \epsilon)$, and define $B_{n_i }(y_i  , 2\epsilon)$. Then $B_{n_i}(y_i , 2\epsilon)\supset B(x_0, x_1, \cdots, x_{n_i-1}; \epsilon)$.
Hence, we can pick the respect $\Gamma =\{B_{n_i}(y_i , 2\epsilon)\}$ covers $Z$. By (\ref{defeq}), one has
\begin{align*}
\begin{split}
&M( \varphi, Z, t, \Gamma )\\
\leq &\sum_{B_{n_i}(y_i, \epsilon)\in \Gamma}\exp\Big(-tn_i+\sup\limits_{x\in B_{n_i}(y_i, \epsilon)}S_{n_i}\varphi(x)\Big)
\\\leq &\sum_{B(x_0, x_1, \cdots, x_{n_i-1}; \epsilon)\in \Lambda}\exp\Big(-tn_i+\sup\limits_{x\in B(x_0, x_1, \cdots, x_{n_i-1}; \epsilon)}S_{n_i}\varphi(x)+n_i\text{Var}(\varphi, 4\epsilon)\Big)
\\
\leq &\sum_{B(x_0, x_1, \cdots, x_{n_i-1}; \epsilon)\in \Lambda}\exp\Big(-(t-\epsilon_0)n_i+\sup\limits_{x\in B(x_0, x_1, \cdots, x_{n_i-1}; \epsilon)} S_{n_i}\varphi(x)-n_i\epsilon_0+n_i \epsilon_0\Big) \\
\leq &1.
\end{split}
\end{align*}
Hence  $ P (\varphi,Z, 2\epsilon)\leq t.$ Letting $\epsilon\rightarrow0$,  $ P (\varphi , Z  )\leq t $.  Since $t$ can be  arbitrary close to $ P (\varphi , Z  )$, one has
 $$\widehat{P} (\varphi, Z) \geq P (\varphi, Z).$$

 \end{proof}
\end{prop}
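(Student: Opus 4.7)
The plan is to prove the two inequalities separately; the easy direction $\widehat{P}(\varphi, Z) \le P(\varphi, Z)$ is essentially a tautology, since every ordinary Bowen ball $B_n(x, \epsilon)$ coincides with the generalized ball $B(x, Tx, \ldots, T^{n-1}x; \epsilon)$. Hence any cover $\Gamma = \{B_{n_i}(x_i, \epsilon)\}_i$ of $Z$ lifts to a generalized cover $\Lambda$ with identical weight, and taking infima yields $\widehat{M} \le M$, whence $\widehat{P} \le P$ after sending $N \to \infty$ and $\epsilon \to 0$.

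For the reverse inequality, I would fix $t > \widehat{P}(\varphi, Z)$ and choose $\epsilon_0 > 0$ with $2\epsilon_0 < t - \widehat{P}(\varphi, Z)$. Using the uniform continuity of $\varphi$ on the compact space $X$, I then pick $\epsilon > 0$ small enough that $\text{Var}(\varphi, 4\epsilon) \le \epsilon_0$. By the definition of $\widehat{P}$, for some large $N$ there is a generalized cover $\Lambda = \{B(x_0^{(i)}, \ldots, x_{n_i - 1}^{(i)}; \epsilon)\}_i$ of $Z$ with $n_i \ge N$ and $\widehat{M}(\varphi, Z, t - \epsilon_0, \Lambda) \le 1$.

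The key step is to replace each generalized ball in $\Lambda$ by an ordinary Bowen ball of slightly enlarged radius. If a generalized ball is empty I discard it; otherwise I pick any $y_i$ in it, so the triangle inequality gives $B(x_0^{(i)}, \ldots, x_{n_i - 1}^{(i)}; \epsilon) \subseteq B_{n_i}(y_i, 2\epsilon)$, and $\Gamma = \{B_{n_i}(y_i, 2\epsilon)\}_i$ still covers $Z$. The main obstacle is that the Bowen ball is larger than the generalized one, so the supremum of $S_{n_i}\varphi$ over it could be larger. However, for any $z \in B_{n_i}(y_i, 2\epsilon)$ one has $d(T^j z, T^j y_i) < 2\epsilon$ for $0 \le j \le n_i - 1$, so $|S_{n_i}\varphi(z) - S_{n_i}\varphi(y_i)| \le n_i \text{Var}(\varphi, 2\epsilon) \le n_i \epsilon_0$; since $y_i$ itself lies in the generalized ball, this yields
$$ \sup_{z \in B_{n_i}(y_i, 2\epsilon)} S_{n_i}\varphi(z) \;\le\; \sup_{w \in B(x_0^{(i)}, \ldots, x_{n_i - 1}^{(i)}; \epsilon)} S_{n_i}\varphi(w) + n_i \epsilon_0. $$

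Substituting this bound into $M(\varphi, Z, t, \Gamma)$, the extra $n_i \epsilon_0$ in each exponent is absorbed by converting $t$ into $t - \epsilon_0$ on the generalized side, giving $M(\varphi, Z, t, \Gamma) \le \widehat{M}(\varphi, Z, t - \epsilon_0, \Lambda) \le 1$. This shows $P(\varphi, Z, 2\epsilon) \le t$; letting $\epsilon \to 0$ and then $t \downarrow \widehat{P}(\varphi, Z)$ completes the argument. I expect the only slightly delicate point to be the bookkeeping of the various factors of $\epsilon$, so that the $2\epsilon$ slack in the Bowen radius and the $\text{Var}(\varphi, \cdot)$ slack in the potential both vanish in the final limit.
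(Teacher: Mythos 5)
Your proposal is correct and follows essentially the same route as the paper: identify each Bowen ball with a generalized ball for the easy inequality, and for the converse enlarge each generalized ball in a near-optimal cover to a Bowen ball $B_{n_i}(y_i,2\epsilon)$ around a chosen point, absorbing the loss $n_i\operatorname{Var}(\varphi,\cdot)\le n_i\epsilon_0$ into the gap $t-\epsilon_0$. In fact your version of the easy direction (taking $x_j=T^jx_i$, and discarding empty generalized balls before choosing $y_i$) is slightly more careful than the paper's, which writes $x_0=\cdots=x_{n_i-1}=x_i$.
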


In this paper, we study transformation $T$ of the following type:
\begin{defn}
A continuous map $T$ satisfies the specification property
if for each $\epsilon>0$ there is an integer
$m=m(\epsilon)$ such that for any collection $\{I_{j}=[a_{j},b_{j}]\subset\mathbb N:j=1,\cdots,k\}$ of finite intervals with $a_{j+1}-b_{j}\geq m(\epsilon)$ for $j=1,\cdots,k-1$ and any $x_{1},\cdots,x_{k}$ in $X$, there exists a point $x$ such that
$$d(T^{p+a_{j}}x,T^{p}x_{j})<\epsilon~\text{for all}~p=0,\cdots,b_{j}-a_{j}~\text{and every}~j=1,\cdots,k.$$
\end{defn}

\section{Proof of Theorem \ref{main}}\label{proofm}
In this section, we will verify Theorem \ref{main}.
The upper bound on $h_{top} ^{B} (   \mathbf D^{x_0}_f  )$ is easy to get by Definition \ref{lllll}.
To obtain the lower bound estimate we need to construct
a suitable  Moran set. The proof will be
divided into the following two subsections.
\subsection{Upper bound for $h_{top} ^{B} (   \mathbf D^{x_0}_f  )$}
For any positive continuous map $f: X\to \mathbb R$, and $x_{0}\in X$. We recall that
$$ \mathbf D^{x_0}_f   :=\Big\{x\in X:  \lim\limits_{\epsilon\to 0}\limsup\limits_{n\to\infty}\dfrac{\max\{t\in\mathbb N:~T^n(x)\in B_{t}( x_0,\epsilon)\}-S_nf(x)}{n}\geq 0\Big\} .$$
 To prove upper bound for $h_{top}^{B}  (   \mathbf D^{x_0}_f  )$ , we only need to prove the following Theorem.
\begin{thm}
Let  $T: X\to X$ be a continuous map. For any positive continuous function $f\in C(X)$ and $x_0\in X$, we have the topological entropy of the subset $\mathbf D^{x_0}_f  ,$
$$h_{top} ^{B }(   \mathbf D^{x_0}_f    )\leq  \sup   \Big\{\dfrac{h_{\mu}(T)}{1+
\int f d\mu}:\mu\in M(X, T)\Big\}  ,$$
 which is the unique solution of topological pressure $P(-s(f+1))=0$.
\begin{proof}
For each $\epsilon>0$, define
$$\mathbf D^{x_0, \epsilon}_f    :=\Big\{x\in X:  \limsup\limits_{n\to\infty}\dfrac{\max\{t\in\mathbb N:~T^n(x)\in B_{t}( x_0,\epsilon)\}-S_nf(x)}{n}\geq 0\Big\} .$$
Clearly, $$\mathbf D^{x_0 }_f\subset \bigcap_{\epsilon>0}\mathbf D^{x_0, \epsilon}_f.$$
Let  $s_0=\sup   \Big\{\dfrac{h_{\mu}(T)}{1+
\int f d\mu}:\mu\in M(X, T)\Big\}$  be the unique solution of topological pressure $P(-s(f+1))=0$.
Fix any $s>s_0$.
It is easy to check that $P:=P(-s (f+1))<0$.
Then, there exists   $\epsilon_1>0, N_1>0$  such that, for any $\eta<\epsilon_1, n\geq N_1$,
 \begin{align}\label{eeta}
 s \eta<-\frac{P}{4},
 \end{align}
 and we can choose the $(n, \eta)$-spanning set $E_{n,\eta}$ such that
\begin{align}\label{ennt}
\sum_{x\in E_{n,\eta}}e^{- s  \inf\limits_{y\in B_{n}(x,\eta)}S_{n}(f+1)(y)}\leq e^{ \frac{nP}{2}}.
\end{align}
Furthermore,
 \begin{align}\label{llll}
 \mathbf D^{x_0}_f\subset \mathbf D^{x_0, \eta }_f\subset  \bigcap_{N=1}^{\infty} \bigcup_{n=N}^{\infty}\bigcup_{ y\in E_{n,\eta }}J( y,    \eta, n, f),
 \end{align}
 where
 \begin{align*}
 \begin{split}
 &J( y,   \eta, n, f) \\
 =&\Bigg\{x\in B_{n}(y, \eta):  \dfrac{\max\{t\in \mathbb N:~ T^nx\in B_{t}(x_0, \eta )\}}{n}   >\dfrac{\inf\limits_{x\in B_{n}(y, \eta)}S_nf(x)}{n} -\eta\Bigg\}  .
 \end{split}
 \end{align*}
 For each $n\geq 1$ and each $y\in E_{n,\eta}$, we define  $t_n^y:=\lfloor  \inf\limits_{x\in B_{n}(y, \eta)}S_nf(x) -n\eta \rfloor $.
For each $x\in   J( y,   \eta, n, f),$
\begin{align*}
  T^nx \in B_{t^y_n  }(  x_0,  \eta),
 \end{align*}
which implies that
 $$J( y, \eta, n,f)\subset B_{n}(y,  \eta)\cap T^{-n} B_{t_n^y}( x_0,   \eta ). $$
So we can get a family of covers $ \Lambda (\eta)=\{B_{n}(y,  \eta)\cap T^{-n} B_{t^y_n}( x_0,  \eta)\}_{y\in E_{n,\eta}, n\geq N_1}.$  For each $B_{n}(y,  \eta)\cap T^{-n} B_{t^y_n}( x_0,  \eta)$,  we define
$y_0=y, y_1=Ty, \cdots, y_{n-1}=T^{n-1}y$ and $y_{n}=x_0, y_{n+1}=T (x_0), \cdots y_{n+t_{n}^y-1}= T^{t_n^y-1}(x_0)$, and get $B(y_0, \cdots, y_{n+t_{n}^y}; \eta).$
Thus we  have
$$B_{n}(y,  \epsilon)\cap T^{-n} B_{ t^y_n}( x_0,  \eta)=B(y_0, \cdots ,y_{n+t_{n}^y-1}; \eta).$$
Hence, we  get the family $\Lambda(\eta):=\{B(y_1, \cdots ,y_{n+t_{n}^y}; \eta)\}$ with $n\geq N_{1}$ covers $Z$.  By (\ref{llll}), (\ref{ennt}) and (\ref{eeta}), one has
 \begin{align*}
 \begin{split}
 & \widehat{M}( 0,\mathbf D^{x_0 }_f, s,    \Lambda(\eta), N_1 )\\
 \leq &\sum_{n=N_1}^{\infty} \sum_{y\in E_{n ,\eta}}e^{-s(n+t_n^y)}\\
  \leq & \sum_{n=N_1}^{\infty}\sum_{y\in E_{n ,\eta}}e^{-s(n+\inf\limits_{x\in B_{n}(y, \eta)}S_nf(x) -n\eta -1) } \\
 = & \sum_{n=N_1}^{\infty}\sum_{y\in E_{n ,\eta}}e^{- s  (n+\inf\limits_{x\in B_{n}(y, \eta)}S_nf(x)  )+s  n\eta +s   }\\
  \leq &\sum_{n=N_1}^{\infty}e^{   \frac{nP}{2} +s  n\eta +s }\\
 \leq &\sum_{n=N_1}^{\infty} e^{   \frac{nP}{4} +s } <\infty.
 \end{split}
 \end{align*}
 Hence, $ h_{top}^B(  \mathbf D^{x_0  }_f)=\lim\limits_{
 \eta\to 0} \widehat{P} (0,\mathbf D^{x_0, \eta }_f,  \eta)\leq s$. Let $s\to s_0$,  we finish the proof.
\end{proof}
\end{thm}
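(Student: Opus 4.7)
The plan is to establish the upper bound $h_{top}^B(\mathbf D^{x_0}_f) \leq s_0$, where $s_0$ is the unique root of $s \mapsto P(-s(f+1))$. Uniqueness is clear because $f+1 \geq 1 > 0$ makes this function continuous and strictly decreasing, with $P(0) = h_{top}(T) \geq 0$ and $P(-s(f+1)) \to -\infty$ as $s \to \infty$. The variational principle rewrites the relation $P(-s_0(f+1)) = 0$ as $s_0 = \sup\{h_\mu(T)/(1+\int f\,d\mu) : \mu \in M(X,T)\}$, so once the entropy inequality is proved, both characterisations in the statement are automatically confirmed.

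Fix $s > s_0$, so that $P := P(-s(f+1)) < 0$. Choose $\eta > 0$ small enough that $s\eta < |P|/4$, and for $n$ large select an $(n,\eta)$-spanning set $E_{n,\eta}$ satisfying
\[
\sum_{y \in E_{n,\eta}} \exp\bigl(-s \inf_{z \in B_n(y,\eta)} S_n(f+1)(z)\bigr) \leq e^{nP/2},
\]
which exists by the spanning-set definition of topological pressure. Note that $\mathbf D^{x_0}_f$ is contained in its $\eta$-relaxation, obtained by replacing the outer $\lim_{\epsilon \to 0}$ by the single scale $\eta$, so it suffices to cover this larger set by generalised Bowen balls in the sense of Definition~\ref{lllll}.

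The geometric step is as follows: for $x$ in the $\eta$-relaxation and any $N$, the $\limsup$ condition produces some $n \geq N$ together with an integer $t \geq S_n f(x) - n\eta$ such that $T^n x \in B_t(x_0, \eta)$; choose $y \in E_{n,\eta}$ with $x \in B_n(y,\eta)$ and set $t_n^y := \lfloor \inf_{z \in B_n(y,\eta)} S_n f(z) - n\eta \rfloor$. Then $x \in B_n(y,\eta) \cap T^{-n} B_{t_n^y}(x_0,\eta)$, and this intersection coincides with the generalised Bowen ball $B(y, Ty, \dots, T^{n-1}y, x_0, Tx_0, \dots, T^{t_n^y - 1}x_0; \eta)$ of length $n + t_n^y$. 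Summing the Carathéodory weights $e^{-s(n + t_n^y)}$ over $y \in E_{n,\eta}$ and $n \geq N$ and invoking the spanning-set bound yields
\[
\widehat M(0, \mathbf D^{x_0}_f, s, \eta, N) \leq e^{s} \sum_{n \geq N} e^{nP/2 + s n \eta} \leq e^{s} \sum_{n \geq N} e^{nP/4} < \infty,
\]
so $\widehat P(0, \mathbf D^{x_0}_f, \eta) \leq s$. Letting $\eta \to 0$ and then $s \downarrow s_0$ closes the argument.

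The subtle point, and where I expect the main work, is the geometric packaging: the definition of $\mathbf D^{x_0}_f$ couples two a priori independent orbit segments (one near $y$ controlling $x$ at times $0, \dots, n-1$, one near $x_0$ controlling $T^n x$ at times $n, \dots, n+t_n^y-1$), and neither a classical Bowen ball nor any cover along a single orbit captures the resulting intersection. This is exactly what the generalised balls $B(y_0, \dots, y_{m-1}; \eta)$ and the equivalence $\widehat P = P$ from the preceding proposition are designed to accommodate; without that equivalence one could not directly convert the Carathéodory bound on generalised covers into a statement about the Bowen topological entropy. The loss term $sn\eta$ in the sum is another place requiring care: it must be absorbed into $|P|/4$, which is why $\eta$ has to be chosen depending on $s$ and $P$ before selecting the spanning sets.
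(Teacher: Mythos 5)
Your proposal is correct and follows essentially the same route as the paper: relax to the single-scale set, pick $(n,\eta)$-spanning sets realizing the negative pressure $P(-s(f+1))<0$, package each piece as the generalised Bowen ball $B(y,Ty,\dots,T^{n-1}y,x_0,\dots,T^{t_n^y-1}x_0;\eta)$ with $t_n^y=\lfloor \inf_{B_n(y,\eta)}S_nf-n\eta\rfloor$, and absorb the $sn\eta$ loss into $|P|/4$ before invoking $\widehat P=P$ and letting $\eta\to0$, $s\downarrow s_0$. No gaps worth noting.
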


\subsection{Lower bound for $h^B_{top}  (    \mathbf D^{x_0 }_f )$}
Firstly, we introduce the Entropy Distribution Principle as follows.
\begin{prop}[\cite{Tho1} Entropy Distribution Principle ] Let $T:X\to X$ be a continuous transformation. Let $Z\subset X$ be arbitrary Borel set. Suppose there exists $\epsilon>0$ and $s\geq 0$ such that one can find a sequence of Borel probability measure $\mu_k$, a constant $K>0$ and an integer $N$ satisfying
$$\lim\limits_{k\to\infty}\sup(\mu_{k}(B_{n}(x, \epsilon)))\leq K\exp ( -sn) $$
for every ball $B_n(x, \epsilon)$ such that $B_n(x, \epsilon)\cap Z\neq \emptyset$ and $n\geq N.$ Furthermore, assume that at least one limit measure $\nu$ of the sequence $\mu_k$ satisfies $\nu(Z)>0$. Then $ h^B_{top}( Z, \epsilon)\geq s.$
 \end{prop}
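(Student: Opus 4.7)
The plan is to show directly from the definition of $M(0,Z,s,\epsilon)$ that it is bounded below by a strictly positive constant, which forces $P(0,Z,\epsilon)\geq s$ and hence $h^B_{top}(Z,\epsilon)\geq s$. The transfer mechanism is weak$^*$ convergence: the hypothesized $\limsup$ bound on the $\mu_k$-mass of Bowen balls gets pushed to a bound on the $\nu$-mass of the same Bowen balls, after which any countable cover of $Z$ by such balls absorbs into the countable subadditivity inequality $\nu(Z)\leq\sum_i\nu(B_{n_i}(x_i,\epsilon))$.

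First I would pass to a subsequence $\mu_{k_j}$ realising the weak$^*$ limit $\nu$ with $\nu(Z)>0$ promised by the hypothesis; such a subsequence exists because $M(X)$ is weak$^*$ compact. Each Bowen ball $B_n(x,\epsilon)=\{y\in X:\max_{0\leq i<n}d(T^ix,T^iy)<\epsilon\}$ is open in $X$, so the portmanteau theorem applied to $\mu_{k_j}\to\nu$ yields
\begin{equation*}
\nu(B_n(x,\epsilon))\leq\liminf_{j\to\infty}\mu_{k_j}(B_n(x,\epsilon))\leq\limsup_{k\to\infty}\mu_k(B_n(x,\epsilon))\leq Ke^{-sn},
\end{equation*}
valid whenever $n\geq N$ and $B_n(x,\epsilon)\cap Z\neq\emptyset$.

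Next I would take any finite or countable cover $\Gamma=\{B_{n_i}(x_i,\epsilon)\}_i$ of $Z$ with each $n_i\geq N$. Discarding balls disjoint from $Z$ only reduces $M(0,Z,s,\Gamma)=\sum_i e^{-sn_i}$ while preserving the covering property, so I may assume $B_{n_i}(x_i,\epsilon)\cap Z\neq\emptyset$ for every $i$. Countable subadditivity of $\nu$ together with the previous step then gives
\begin{equation*}
0<\nu(Z)\leq\sum_i\nu(B_{n_i}(x_i,\epsilon))\leq K\sum_i e^{-sn_i}=K\cdot M(0,Z,s,\Gamma).
\end{equation*}
Taking the infimum over all such $\Gamma$ yields $M(0,Z,s,\epsilon,N)\geq\nu(Z)/K$, and letting $N\to\infty$ gives $M(0,Z,s,\epsilon)\geq\nu(Z)/K>0$. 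By the definition of $P(0,Z,\epsilon)$ this forces $s\leq P(0,Z,\epsilon)=h^B_{top}(Z,\epsilon)$, as required.

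The main obstacle is the direction of the portmanteau inequality: the hypothesis supplies only a $\limsup$ bound on $\mu_k(B_n(x,\epsilon))$, while weak$^*$ convergence to $\nu$ gives only $\nu(U)\leq\liminf_k\mu_k(U)$ for open $U$. These combine correctly because $\liminf\leq\limsup$, but one genuinely needs the Bowen balls to be open (not merely Borel) for portmanteau to apply in this form. A second, minor, technicality is to make sure the subsequence used for weak$^*$ convergence is the one landing on the specific limit measure $\nu$ with $\nu(Z)>0$ that the hypothesis supplies, rather than an arbitrary limit point of $\{\mu_k\}$ for which the mass of $Z$ might vanish.
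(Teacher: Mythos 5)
Your proof is correct and is essentially the standard argument: the paper itself gives no proof of this proposition (it is quoted from Thompson's paper \cite{Tho1}), and Thompson's proof is exactly your route — pass to a subsequence converging weak$^*$ to the limit measure $\nu$ with $\nu(Z)>0$, use openness of Bowen balls and the portmanteau inequality to get $\nu(B_{n}(x,\epsilon))\leq Ke^{-sn}$ for balls meeting $Z$ with $n\geq N$, and then bound any admissible cover from below via countable subadditivity, giving $M(0,Z,s,\epsilon)\geq \nu(Z)/K>0$ and hence $h^{B}_{top}(Z,\epsilon)\geq s$. Nothing further is needed.
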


The following result generalises Katok's formula for measure-theoretic entropy. In \cite{Men}, Mendoza gave a proof based on idea from the Misiurewicz proof of the variational principle. Although he states the result under the assumption that $T$ is a homeomorphism, his proof works for $T$ continuous.

  \begin{prop}\label{Katok}
  Let $(X, d)$ be a compact metric space $ T: X\to X$ be a continuous map and $\mu$ be an ergodic invariant measure. For $ \epsilon>0, \gamma\in (0, 1)$ and $\phi\in C(X)$, define
  $$N^{\mu}(\phi,\gamma, \epsilon, n)=\inf \Big\{\sum_{x\in \mathcal{S}}\Big\{\sum_{i=0}^{n-1}\phi(T^ix)\Big\}\Big\},$$
  where the infimum is taken over all sets $\mathcal{S}$ which $(n, \epsilon) $ span some $Z$ with $\mu(Z)>1-\gamma$. We have
  $$h_{\mu}(T)+\int\phi d\mu= \lim\limits_{\epsilon\to 0}\liminf\limits_{n\to\infty}\frac{1}{n}\log N^{\mu}(\phi,\gamma, \epsilon, n).$$
  The formula remains true if we replace the $\liminf$ by $\limsup.$
  \end{prop}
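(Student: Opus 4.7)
The plan is to prove both bounds
\[
h_\mu(T) + \int \phi\,d\mu \leq \lim_{\epsilon\to 0}\liminf_{n\to\infty} \frac{1}{n}\log N^\mu(\phi,\gamma,\epsilon, n)
\]
and
\[
\lim_{\epsilon\to 0}\limsup_{n\to\infty} \frac{1}{n}\log N^\mu(\phi,\gamma,\epsilon, n) \leq h_\mu(T) + \int \phi\,d\mu,
\]
since together they pinch both the liminf and limsup version to the same common value. The main ingredients are Katok's classical entropy formula (and its dual Brin--Katok local entropy theorem $-\tfrac{1}{n}\log\mu(B_n(x,\epsilon))\to h_\mu(T)$ as $n\to\infty$ and $\epsilon\to 0$) together with Birkhoff's ergodic theorem applied to $\phi$; Egorov's theorem will upgrade these pointwise convergences to uniform ones on sets of large measure. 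Ergodicity is used exactly to make both the Brin--Katok local entropy and the Birkhoff average almost surely constant.

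For the upper bound, fix $\delta>0$. By Birkhoff and Egorov, choose a set $A$ with $\mu(A)>1-\gamma/4$ and $N_0$ so that $S_n\phi(x)\leq n(\int\phi\,d\mu+\delta)$ for every $x\in A$ and $n\geq N_0$. By Katok's classical formula at tolerance $\gamma/4$ and scale $\epsilon/2$, for each sufficiently large $n$ there is an $(n,\epsilon/2)$-spanning set $\mathcal{S}_n$ of some $Z_n$ with $\mu(Z_n)>1-\gamma/4$ and $|\mathcal{S}_n|\leq e^{n(h_\mu(T)+\delta)}$. For each $x\in\mathcal{S}_n$ with $B_n(x,\epsilon/2)\cap A\neq\emptyset$, pick $y_x$ in that intersection and discard the rest; the resulting $\mathcal{S}_n'$ is $(n,\epsilon)$-spanning for $Z_n\cap A$, whose measure exceeds $1-\gamma/2>1-\gamma$. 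Since every $y_x\in A$,
\[
\sum_{y\in\mathcal{S}_n'}e^{S_n\phi(y)}\leq |\mathcal{S}_n|\,e^{n(\int\phi\,d\mu+\delta)}\leq e^{n(h_\mu(T)+\int\phi\,d\mu+2\delta)},
\]
and taking $\limsup_n$, then $\epsilon\to 0$, then $\delta\to 0$ yields the upper bound.

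For the lower bound, fix $\delta>0$ and choose $\epsilon>0$ small enough that $\mathrm{Var}(\phi,\epsilon)<\delta$. Setting $\gamma':=(1-\gamma)/4$, Brin--Katok and Egorov produce $A$ with $\mu(A)>1-\gamma'$ and $N_1$ so that $\mu(B_n(y,\epsilon))\leq e^{-n(h_\mu(T)-\delta)}$ for $y\in A$ and $n\geq N_1$; Birkhoff and Egorov give $A'$ with $\mu(A')>1-\gamma'$ and $N_2$ so that $S_n\phi(y)\geq n(\int\phi\,d\mu-\delta)$ for $y\in A'$ and $n\geq N_2$. For any admissible $(Z,\mathcal{S})$ with $\mathcal{S}$ an $(n,\epsilon/2)$-spanning set of $Z$ and $\mu(Z)>1-\gamma$, the set $Z\cap A\cap A'$ has measure at least $(1-\gamma)/2>0$. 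Restrict to $\mathcal{S}_0:=\{x\in\mathcal{S}:B_n(x,\epsilon/2)\cap A\cap A'\cap Z\neq\emptyset\}$ and pick $y_x$ in that intersection. Since $B_n(x,\epsilon/2)\subset B_n(y_x,\epsilon)$, one has $\mu(B_n(x,\epsilon/2)\cap A\cap A')\leq e^{-n(h_\mu(T)-\delta)}$, so $|\mathcal{S}_0|\geq\tfrac{1-\gamma}{2}\,e^{n(h_\mu(T)-\delta)}$. Moreover $|S_n\phi(x)-S_n\phi(y_x)|\leq n\,\mathrm{Var}(\phi,\epsilon)<n\delta$, hence $S_n\phi(x)\geq n(\int\phi\,d\mu-2\delta)$; summing yields
\[
\sum_{x\in\mathcal{S}}e^{S_n\phi(x)}\geq \tfrac{1-\gamma}{2}\,e^{n(h_\mu(T)+\int\phi\,d\mu-3\delta)}.
\]
Taking $\liminf_n$, then $\epsilon\to 0$, then $\delta\to 0$ completes the lower bound.

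The main obstacle is coordinating the several ``good sets'' produced by Brin--Katok, Birkhoff, and Katok's classical formula so that their intersection with an arbitrary $Z$ of measure exceeding $1-\gamma$ still has positive measure; this is why the tolerances are taken as $\gamma/4$ (upper bound) and $(1-\gamma)/4$ (lower bound) rather than something cruder. A secondary issue is the cost of replacing a spanning point $x$ by a nearby $y_x$: this introduces an error of order $n\,\mathrm{Var}(\phi,\epsilon)$ in $S_n\phi$, which by uniform continuity of $\phi$ is $o(n)$ as $\epsilon\to 0$ and is therefore absorbed into the $\delta$-slack before the final $\delta\to 0$ pass.
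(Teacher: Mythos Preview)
The paper does not actually prove this proposition: it is quoted from Mendoza \cite{Men}, and the text notes that Mendoza's argument follows the Misiurewicz scheme for the variational principle (building invariant measures out of empirical distributions on separated sets and exploiting subadditivity). Your proposal takes a genuinely different route, treating the classical Katok entropy formula and the Brin--Katok local entropy theorem as black boxes and combining them with Birkhoff and Egorov. This is cleaner in the sense that it isolates exactly what is new here (the additive $\int\phi\,d\mu$ term) from what is already known (the $\phi\equiv 0$ case), whereas Mendoza's approach reproves the entropy part in tandem with the potential part. The trade-off is that your argument imports two nontrivial theorems, while the Misiurewicz-style argument is more self-contained.

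Your argument is essentially correct, but one point in the lower bound deserves a word of caution. You write ``choose $\epsilon>0$ small enough that $\mathrm{Var}(\phi,\epsilon)<\delta$'' and then invoke Brin--Katok and Egorov to produce the set $A$ with $\mu(B_n(y,\epsilon))\leq e^{-n(h_\mu(T)-\delta)}$ on $A$. The Brin--Katok theorem gives $\lim_{\epsilon\to 0}\liminf_n(-\tfrac{1}{n}\log\mu(B_n(x,\epsilon)))=h_\mu(T)$ almost everywhere, so the threshold $\epsilon$ at which the inner $\liminf$ exceeds $h_\mu(T)-\delta$ a priori depends on $x$. You therefore need an extra step: by monotonicity in $\epsilon$ the sets $\{x:\liminf_n(-\tfrac{1}{n}\log\mu(B_n(x,\epsilon)))>h_\mu(T)-\delta\}$ increase to a full-measure set as $\epsilon\downarrow 0$, so one first picks $\epsilon$ small enough that this set already has measure exceeding $1-\gamma'/2$, and only then applies Egorov. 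In other words, $\epsilon$ must be chosen small for \emph{two} reasons (the variation bound and the Brin--Katok bound), not just the one you stated. This is a routine fix and does not affect the structure of the argument.
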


\begin{thm}
Let  $T: X\to X$ be a continuous map with specification property. For any positive continuous function $f\in C(X)$ and $x_0\in X$, we have the topological entropy of the subset $\mathbf D^{x_0 }_f ,$
$$h_{top}^{B}  (T, \mathbf D^{x_0 }_f  )\geq   \sup   \Big\{\dfrac{h_{\mu}(T)}{1+
\int f d\mu}:\mu\in M(X, T)\Big\}  .$$
\end{thm}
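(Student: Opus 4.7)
The plan follows the standard Moran-set technique for systems with specification (compare Thompson \cite{Tho1}): for each ergodic $\mu\in E(X,T)$ with $h_\mu(T)>0$, I build a Cantor subset $F\subseteq \mathbf D^{x_0}_f$ supporting a measure $\nu$ whose Bowen-ball masses decay at rate $s$ for every $s<h_\mu(T)/(1+\int f\,d\mu)$, and then invoke the Entropy Distribution Principle. By the ergodic decomposition, $\sup\{h_\mu(T)/(1+\int f\,d\mu):\mu\in M(X,T)\}$ equals the corresponding supremum over $E(X,T)$ (if $g(\xi):=h_{\mu_\xi}(T)/(1+\int f d\mu_\xi)\leq M$ a.e.\ then $h_{\mu_\xi}\leq M(1+\int f d\mu_\xi)$ integrates to $h_\mu/(1+\int f d\mu)\leq M$), so this reduction is sufficient.

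Fix such a $\mu$, set $a:=\int f\,d\mu$ and choose $s<h_\mu(T)/(1+a)$, together with a small $\delta>0$. Pick $\epsilon_k\downarrow 0$ with $\epsilon_{k+1}\leq \epsilon_k/64$. Proposition \ref{Katok} with $\phi=0$, combined with the Birkhoff ergodic theorem applied to $f$, produces integers $N_k\to\infty$ and $(N_k,8\epsilon_k)$-separated sets $\Gamma_k\subset X$ with $|\Gamma_k|\geq \exp(N_k(h_\mu(T)-\delta))$ and $|S_{N_k}f(y)/N_k-a|\leq \delta$ for every $y\in\Gamma_k$. With $m_k=m(\epsilon_k/4)$ supplied by specification, block $k$ is designed as: $N_k$ steps $\epsilon_k/4$-shadowing a chosen $y_k\in\Gamma_k$, a gap $m_k$, then $s_k:=\lceil(a+3\delta)B_k\rceil$ steps $\epsilon_k/4$-shadowing $x_0$, then another gap $m_k$, where $B_k$ is the cumulative time at the end of the $y_k$-window. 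I choose the $N_k$ to grow so rapidly that $N_k$ dwarfs all cumulative previous lengths. For each $\mathbf y\in\prod_k\Gamma_k$, specification furnishes a point $z(\mathbf y)$ realising this profile; let $F_k$ be a set of representatives indexed by $(y_1,\ldots,y_k)$ and $F:=\bigcap_{k}\overline{\bigcup_{k'\geq k}F_{k'}}$.

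To verify $F\subseteq \mathbf D^{x_0}_f$, observe that at $n=B_k$ the orbit $\epsilon_k/4$-shadows $x_0$ for $s_k$ steps, so $\max\{t:T^nz\in B_t(x_0,\epsilon_k)\}\geq s_k\geq (a+3\delta)B_k$. Splitting $S_{B_k}f(z)$ block-by-block and combining uniform continuity of $f$, the controlled Birkhoff sums on the $\Gamma_j$-pieces, and the negligible (thanks to rapid growth of $N_k$) contribution of the earlier shadow-of-$x_0$ pieces, one obtains $S_{B_k}f(z)\leq (a+2\delta)B_k+o(B_k)$. Thus $(\max\{t\}-S_{B_k}f(z))/B_k\geq \delta$ along the subsequence $n=B_k$, for every $\epsilon\geq\epsilon_k$; letting $\epsilon\to 0$ places $z$ in $\mathbf D^{x_0}_f$.

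Let $\nu_k$ be uniform on $F_k$ and $\nu$ any weak-$*$ limit point. For a Bowen ball $B_n(x,\epsilon_{k_0}/16)$ meeting $F$ with $n$ inside block $k+1$ (so $k\geq k_0$), the triangle inequality together with the specification accuracy $\epsilon_j/4$ and the $(N_j,8\epsilon_j)$-separation of each $\Gamma_j$ jointly force $y_1,\ldots,y_k$ to be uniquely determined. Hence
$$\nu_{k'}\bigl(B_n(x,\epsilon_{k_0}/16)\bigr)\leq \frac{M_{k+1}}{|\Gamma_1|\cdots|\Gamma_{k+1}|}\quad\text{for all } k'\geq k+1,$$
where $M_{k+1}$ bounds the number of partial-block-$(k+1)$ branches compatible with the ball. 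Past the useful window of block $k+1$ we have $M_{k+1}=1$, and the rapid growth of $N_k$ together with $s_k/B_k\to a$ gives the decay rate $(h_\mu(T)-\delta)/(1+a)$. Applying the Entropy Distribution Principle with $\epsilon=\epsilon_{k_0}/16$ yields $h_{top}^B(\mathbf D^{x_0}_f,\epsilon_{k_0}/16)\geq s$, and letting $s\nearrow h_\mu(T)/(1+a)$ concludes. The main obstacle is bounding $M_{k+1}$ when $n$ lies inside the useful window of block $k+1$: one must control the number of $y\in\Gamma_{k+1}$ compatible with a partial Bowen ball at scale $n'<N_{k+1}$. I will handle this by applying the Katok $(n',\epsilon)$-spanning estimate to the $\mu$-almost-full set from which $\Gamma_{k+1}$ was extracted, obtaining $M_{k+1}\lesssim \exp(n'(h_\mu(T)+\delta))$, and then balancing this against $|\Gamma_{k+1}|\geq\exp(N_{k+1}(h_\mu(T)-\delta))$ across all admissible values of $n$ inside block $k+1$ to recover the target exponent $s$.
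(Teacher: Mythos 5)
Your overall strategy (Moran set built with specification, Birkhoff control of $f$ on the separated sets, shadowing of $x_0$ for a time proportional to the accumulated Birkhoff sum, then the Entropy Distribution Principle) is the same as the paper's, but two specific design choices create genuine gaps. First, you take your separated sets $\Gamma_k$ at the shrinking scales $8\epsilon_k$, while the Entropy Distribution Principle must be verified at one \emph{fixed} scale (you choose $\epsilon_{k_0}/16$) for \emph{all} large $n$. For a ball $B_n(x,\epsilon_{k_0}/16)$ with $n$ deep inside block $k+1$, $k\gg k_0$, the branches that differ only in some block $j$ with $k_0<j\le k$ are merely $8\epsilon_j$-separated with $\epsilon_j\le \epsilon_{k_0}/64^{\,j-k_0}$, which is far below the ball's diameter $\epsilon_{k_0}/8$; so the ball does \emph{not} determine $y_j$ for those $j$, your bound $\nu_{k'}(B_n)\le M_{k+1}/(|\Gamma_1|\cdots|\Gamma_{k+1}|)$ fails, and the decay exponent degrades to essentially the contribution of the first $k_0$ blocks only. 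This is exactly why the paper keeps the separation scale fixed: its sets $\mathcal S_k$ are $(n_k,6\epsilon)$-separated for one $\epsilon$ for all $k$, and only the specification accuracies $\epsilon_k=\epsilon/2^k$ shrink, so distinct branches stay $3\epsilon$- or $5\epsilon$-apart and a ball of radius $\epsilon/2$ pins them down (Lemmas \ref{jicheng2} and \ref{le1}).

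Second, your one-long-block-per-level design makes the intermediate-time estimate, which you yourself flag as ``the main obstacle,'' unworkable as proposed. When $n$ ends at time $n'$ inside the $y_{k+1}$-window you need $M_{k+1}\le |\Gamma_{k+1}|\exp(-n'(h_\mu(T)-O(\delta)))$, i.e.\ the prefix constraint must cut the count down at the full entropy rate; nothing in the construction guarantees this (the elements of $\Gamma_{k+1}$ could share long common prefixes, in which case $M_{k+1}$ is comparable to $|\Gamma_{k+1}|$ and, since $N_{k+1}$ dwarfs everything before it, $\mu_{k'}(B_n)$ decays at an arbitrarily small exponential rate for $n'\sim N_{k+1}/2$). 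Your proposed remedy $M_{k+1}\le\exp(n'(h_\mu(T)+\delta))$ cannot be extracted from Proposition \ref{Katok}: Katok's formula lower-bounds spanning numbers of sets of almost full measure; it gives no upper bound on how many $(N_{k+1},8\epsilon_{k+1})$-separated points are compatible with a partial Bowen-ball constraint. The paper's construction is built precisely to avoid this issue: each level consists of $N_{k+1}$ \emph{short} blocks of fixed length $n_{k+1}$ drawn from $\mathcal S_{k+1}$, so a partial time pins down exactly the first $j$ short blocks, the (weighted) masses factorize, and the unconstrained factor $M_{k+1}^{N_{k+1}-j}$ cancels against the normalization (Case 2 of the paper's argument). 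To repair your proof you would need both changes: fix the separation scale across all levels, and replace the single long $\Gamma_{k+1}$-block by a concatenation of many short blocks (or otherwise enforce an exact prefix-counting property on $\Gamma_{k+1}$).
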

 We fix some $C<  \sup   \Big\{\dfrac{h_{\mu }(T)}{1+
\int f d\mu}:\mu\in M(X, T)\Big\}$, and $\eta>0$.
 By ergodic decomposition Theorem, we can  choose  an ergodic measure $\mu\in E(X, T)$ such that
$$   \dfrac{h_{\mu }(T)}{1+
\int f d\mu} \geq C.$$
   For any $k\geq 1$, one can pick sufficiently large   $s_k $ so the set
$$Y_k:=\Big\{x\in X:~\left|\frac{S_{n}f(x)}{n}-\int f d\mu \right|<\eta  ~\text{for all}~n\geq s_k\Big\}$$
 satisfies $\mu (Y_k)>1-\eta$.
 This is possible by Birkhoff ergodic theorem.
\begin{prop}\label{propk}
For the above  $\eta>0$, we choose $ \epsilon>0$, a sequence $\{n_k\}_{k\geq 1} $
and  the  $(n_k, 6\epsilon)$-separated set $\mathcal{S}_k$ such that
\begin{align}\label{mma}
 M_k:=\sum_{x\in \mathcal{S}_k}e^{S_{n_k}(f+1)(x)}\geq  \exp (n_k(h_{\mu}(T)+1+\int f d\mu-\eta)).
\end{align}
Furthermore,
define $\epsilon_k:=\frac{\epsilon}{2^k} .$
Use the definition of specification property, we set $m_k:=m(\epsilon_k)$.
We can choose the above sequence $\{n_k\}_{k\geq 1}$ satisfies
 \begin{align}\label{et3}
\frac{m_k\|f\|}{n_k}<\frac{\eta}{4}.
\end{align}
\begin{proof}
By Proposition \ref{Katok}, we choose $\epsilon$ small enough such that
$$\liminf\limits_{n\to\infty}\frac{1}{n}\log N_n^{\mu}(f+1,\eta, 6\epsilon )\geq  h_{\mu}(T)+1+\int f d\mu-\frac{\eta}{2}.$$
For $A\subset X$, we define
\begin{align*}
Q_n(A, \epsilon)=\inf\left\{\# S:S ~~is~~
(  n,\epsilon)~~ spanning~~ set~~ for ~~A \right\},
\end{align*}
\begin{align*}
P_n(A, \epsilon)=\sup\left\{\# S:S ~~is~~
( n,\epsilon)~~ separated~~ set~~ for ~~A \right\}.
\end{align*}
Since $ \mu (Y_{k})>1-\eta$, we have
\begin{align*}
P_n(Y_{k }, 6\epsilon)\geq Q_n(Y_{k }, 6\epsilon)\geq N_n^{\mu}(f+1,\eta, 6\epsilon ).
\end{align*}
 By the definition of $N_n^{\mu}(f+1, \eta, 6\epsilon)$, we can choose a sequence $n_k\to\infty$ satisfying  (\ref{mma}),
  (\ref{et3}).
\end{proof}
\end{prop}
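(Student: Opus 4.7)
The plan is to derive Proposition \ref{propk} as an essentially direct translation of Katok's entropy formula (Proposition \ref{Katok}) into the language of weighted separated sums, followed by a routine subsequence extraction to accommodate the specification gaps $m_k$.

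First I would apply Proposition \ref{Katok} to the ergodic measure $\mu$ fixed just before the proposition, with potential $\phi := f+1$ and tolerance $\gamma := \eta$. Since $\int(f+1)\,d\mu = 1 + \int f\,d\mu$, the formula yields some $\epsilon>0$, which I freeze for the rest of the argument (and which in turn determines $\epsilon_k = \epsilon/2^k$ and hence $m_k = m(\epsilon_k)$), such that
$$\liminf_{n\to\infty}\frac{1}{n}\log N^{\mu}(f+1,\eta,6\epsilon,n)\geq h_{\mu}(T)+1+\int f\,d\mu-\frac{\eta}{2}.$$

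Next comes the only non-routine step: converting the spanning-type quantity $N^{\mu}$ into a lower bound for a weighted sum over a separated set. For each $n$ I would take $\mathcal{S}\subset X$ to be a maximal $(n,6\epsilon)$-separated subset. By maximality $\mathcal{S}$ is automatically $(n,6\epsilon)$-spanning for $X$; since $\mu(X)=1>1-\eta$, this $\mathcal{S}$ is admissible in the infimum defining $N^{\mu}(f+1,\eta,6\epsilon,n)$, so
$$\sum_{x\in\mathcal{S}}\exp\bigl(S_n(f+1)(x)\bigr)\;\geq\; N^{\mu}(f+1,\eta,6\epsilon,n).$$
Combining with the $\liminf$ bound above, for all sufficiently large $n$ this sum exceeds $\exp\bigl(n(h_{\mu}(T)+1+\int f\,d\mu-\eta)\bigr)$, which is precisely (\ref{mma}).

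Finally, to arrange (\ref{et3}) simultaneously I would extract a subsequence. Each $m_k = m(\epsilon/2^k)$ is a fixed positive integer depending only on $\epsilon$ and $k$, whereas the weighted bound above holds for all sufficiently large $n$; I can therefore choose $n_k$ inductively so large that $n_k > 4 m_k \|f\|/\eta$ while the weighted inequality still holds, producing both (\ref{mma}) and (\ref{et3}) together with the required separated sets $\mathcal{S}_k$. The one substantive step is the maximal-separated-to-spanning conversion inside Katok's infimum; since that is a standard trick, I do not anticipate a serious obstacle.
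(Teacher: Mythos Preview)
Your argument is correct for the proposition exactly as stated, and the overall strategy (Katok's formula, then the maximal-separated-set trick inside the infimum, then subsequence extraction) is the same one the paper uses. There is, however, one genuine difference worth flagging.

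You take $\mathcal{S}$ to be a maximal $(n,6\epsilon)$-separated subset of $X$ and observe that $\mu(X)=1>1-\eta$ makes it admissible in Katok's infimum. The paper instead takes the separated set inside the set $Y_k$ introduced just before the proposition: since $\mu(Y_k)>1-\eta$, a maximal $(n_k,6\epsilon)$-separated subset of $Y_k$ is $(n_k,6\epsilon)$-spanning for $Y_k$ and hence also admissible in the infimum defining $N^{\mu}(f+1,\eta,6\epsilon,n_k)$, giving the same bound (\ref{mma}). The extra property this buys is $\mathcal{S}_k\subset Y_k$, i.e.\ every $x\in\mathcal{S}_k$ satisfies $\bigl|\tfrac{1}{n}S_nf(x)-\int f\,d\mu\bigr|<\eta$ for all $n\ge s_k$. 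That inclusion is not written into the statement of the proposition, but it is used repeatedly downstream---for instance in the estimate (\ref{che1}) and in the lines beginning ``Since $x_j^i\in Y_i$''---to control $S_{n_i}f$ on the building blocks of the Moran fractal. Your version of $\mathcal{S}_k$ gives no such control, so while it proves the proposition as written, it would not plug into the rest of Section~\ref{proofm} without modification. The fix is trivial: just run your maximal-separated-set argument inside $Y_k$ rather than $X$, and additionally require $n_k\ge s_k$ when extracting the subsequence.
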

Without losing generality, in the following, we set the $\epsilon$ is small enough such that
\begin{align}\label{et1}
\text{Var}(f;  2\epsilon)<\frac{\eta}{4}.
\end{align}

\subsection{Construction of the Fractal $\mathbf F$}
We enumerate the points in the sets $\mathcal{S}_k$ provided by Proposition \ref{propk}  and write them as follows
$$\mathcal{S}_k=\{x_i : i=1, 2, \cdots, \# \mathcal{S}_k\}.$$
Let us choose any $N_1$ and define $l_1:= N_1(n_1+m_1)$.
For each $\underline{x}_1 =(x_1, \cdots, x_{N_1})\in  \mathcal{S}_1^{N_1}$.We
choose   $R_1 \in\mathbb{N} $ large enough so that
 \begin{align}
  R_1\geq \sup_{x\in H(\underline{x}_1;  2\epsilon_1 )}S_{l_1-m_1}f(x) \geq R_1-1,
 \end{align}
 where  $$H(\underline{x}_1; 2\epsilon_1 ):=  \bigcap_{j=1}^{N_1} T^{-(j-1)(n_1+m_1)}B_{n_{1}}(x_{j}, 2\epsilon_1  ) .$$
Using the specification property,
 we have
 $$D(\underline{x}_1; \epsilon_1 )=\bigcap_{j=1}^{N_1} T^{-(j-1)(n_1+m_1)}B_{n_{1}}(x_{ j }, \epsilon_1  )\bigcap T^{-l_1}B_{R_1}(x_0, \epsilon_1)\\
 =H(\underline{x}_1; \epsilon_1 )\bigcap T^{-l_1}B_{R_1}(x_0, \epsilon_1)\neq \emptyset.$$
 For each $\underline{x}_1\in  \mathcal{S}_1^{N_1}$, we choose a point $z(\underline{x}_1)\in D(\underline{x}_1; \epsilon_1 ).$
 Define the intermediate set
 $$\mathcal{C}_1:=\{z(\underline{x}_1):\underline{x}_1  \in  \mathcal{S}_1^{N_1} \}.$$
 For each $k\geq 1.$ We define the intermediate set $\mathcal{C}_{k+1}$ from $\mathcal{C}_k$. We recall that $\mathcal{C}_i, N_i, R_i, l_i$ for $1\leq i\leq k$ has been defined. For any $z(\underline{x}_{1},\cdots,\underline{x}_{k})\in \mathcal{C}_k$,
 we can choose $N_{k+1}$ large enough and define $l_{k+1}:=l_k+R_k+m_{k+1}+N_{k+1}(m_{k+1}+n_{k+1})$  such that
 \begin{align}\label{et2}
\frac{1}{N_{k+1}}<\frac{\eta}{4},
\end{align}
\begin{align}\label{et4}
\frac{(l_k+R_k)\|f\|}{l_{k+1}}<\frac{\eta}{4},
\end{align}
\begin{align}\label{ll}
\frac{\sum_{i=1}^{k+1}l_i}{l_k}<2,
\end{align}
and
\begin{align}\label{ll1}
\frac{n_{k+1}}{N_k}\rightarrow0
\end{align}
as $k\rightarrow\infty$. Now we can see that the chosen of $N_{k+1}$ may depend on $z(\underline{x}_{1},\cdots,\underline{x}_{k})$. But, as $(\underline{x}_{1},\cdots,\underline{x}_{k})\in \mathcal{S}_{1}^{N_{1}}\times \cdots \times\mathcal{S}_{k}^{N_{k}}$ and $\#(\mathcal{S}_{1}^{N_{1}}\times \cdots \times\mathcal{S}_{k}^{N_{k}})$ is finite, we can choose $N_{k+1}$ large enough such that $N_{k+1}$ dose not depend on $z(\underline{x}_{1},\cdots,\underline{x}_{k})$.

For each $\underline{x}_{k+1}=(x_1, \cdots, x_{N_{k+1}})\in  \mathcal{S}_{k+1}^{N_{k+1}}$.
We choose $R_{k+1} \in \mathbb N$ so that
\begin{align}\label{rrk}
  R_{k+1}\geq \sup_{x\in H(\underline{x}_1, \cdots, \underline{x}_k, \underline{x}_{k+1} ;  2\epsilon_{k+1} )}S_{l_{k+1}-m_{k+1}}f(x)\geq R_{k+1}- 1 ,
 \end{align}
 where
 \begin{align*}
 \begin{split}
 &H(\underline{x}_1, \cdots, \underline{x}_k, \underline{x}_{k+1} ;   2\epsilon_{k+1} ) \\
 =& B_{l_k+R_k} (  z(\underline{x}_1, \cdots, \underline{x}_k),   2\epsilon_{k+1} )\bigcap T^{-l_k-R_k-m_{k+1}}\Big(
 \cap_{j=1}^{N_{k+1}} T^{-(j-1)(n_{k+1}+m_{k+1})}B_{n_{k+1}}(x_{ j },2\epsilon_{k+1}  )\Big).
 \end{split}
 \end{align*}
 By the specification, we obtain
 $$D(\underline{x}_1, \cdots, \underline{x}_k, \underline{x}_{k+1} ;   \epsilon_{k+1} ):=H(\underline{x}_1, \cdots, \underline{x}_k, \underline{x}_{k+1} ;   \epsilon_{k+1} )\bigcap T^{-l_{k+1}}B_{R_{k+1}}(x_0, \epsilon_{k+1})\neq \emptyset.$$
Choose a point $z(\underline{x}_1, \cdots, \underline{x}_k, \underline{x}_{k+1}   )\in D(\underline{x}_1, \cdots, \underline{x}_k, \underline{x}_{k+1} ;   \epsilon_{k+1} )$. We define
\begin{align*}
\begin{split}
 \mathcal{C}_{k+1}
(z(\underline{x}_1, \cdots, \underline{x}_k   )) := \{ z(\underline{x}_1, \cdots, \underline{x}_k, \underline{x}_{k+1}   ): \underline{x}_{k+1}\in   \mathcal{S}_{k+1}^{N_{k+1}}\}
\end{split}
\end{align*}
for each $z(\underline{x}_1, \cdots, \underline{x}_k)\in \mathcal{C}_{k}.$  Moreover,
$$\mathcal{C}_{k+1}=\bigcup_{z(\underline{x}_1, \cdots, \underline{x}_k )\in\mathcal{C}_{k}}\mathcal{C}_{k+1}
(z(\underline{x}_1, \cdots, \underline{x}_k )).$$
In the following, we call $z\in\mathcal{C}_{k+1}$ descends from $x\in\mathcal{C}_{k}$ if $z=z(\underline{x}_1, \cdots, \underline{x}_k, \underline{x}_{k+1}   )$ and $x=z(\underline{x}_1, \cdots, \underline{x}_k)$ for some $(\underline{x}_{1},\cdots,\underline{x}_{k},\underline{x}_{k+1})\in \mathcal{S}_{1}^{N_{1}}\times \cdots \times\mathcal{S}_{k}^{N_{k}}\times\mathcal{S}_{k+1}^{N_{k+1}}$. We call the points in  $\mathcal{C}_{k+1}
(z(\underline{x}_1, \cdots, \underline{x}_k ))$ which descend from the same 'mother'  $(z(\underline{x}_1, \cdots, \underline{x}_k ))$ are 'sisters'.

In the end of the construction of intermediate $\{\mathcal{C}_k\}_{k\in \mathbb N}$, we remark with the estimates of the  time $R_k$ shadowing $x_0$.
\begin{prop}\label{rkupp}
For each $k\geq0$ and every $(\underline{x}_1, \cdots, \underline{x}_k, \underline{x}_{k+1})     \in \mathcal{S}_{ 1}^{N_{ 1}}\cdots \times \mathcal{S}_{k+1}^{N_{k+1}}$ with $ \underline{x}_{k+1}=(x_1, \cdots, x_{N_{k+1}})$ .
 Then we have $$R_{k+1} \leq
 \sum_{s=1}^{N_{k+1}}S_{n_{k+1}}f(x_{s})+ l_{k+1} \eta.$$
 \end{prop}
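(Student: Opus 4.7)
The goal is to estimate the quantity $\sup_{x\in H(\underline{x}_1,\ldots,\underline{x}_{k+1};2\epsilon_{k+1})} S_{l_{k+1}-m_{k+1}}f(x)$ from above, since by the defining inequality (\ref{rrk}) one has $R_{k+1}\leq \sup S_{l_{k+1}-m_{k+1}}f(x)+1$. My plan is to decompose the orbit of any $x\in H(\underline{x}_1,\ldots,\underline{x}_{k+1};2\epsilon_{k+1})$ on the time window $[0, l_{k+1}-m_{k+1})$ into pieces that either shadow a designated orbit or are ``blank'' gaps of length $m_{k+1}$ coming from the specification property.

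Concretely, the identity $l_{k+1}-m_{k+1}=(l_k+R_k)+N_{k+1}(n_{k+1}+m_{k+1})$ splits the window into: an initial block of length $l_k+R_k$ on which $x$ shadows $z(\underline{x}_1,\ldots,\underline{x}_k)$; an initial gap of length $m_{k+1}$; the $N_{k+1}$ shadowing blocks of length $n_{k+1}$ tracking $x_1,\ldots,x_{N_{k+1}}$; and $N_{k+1}-1$ intermediate gaps of length $m_{k+1}$. On each block that shadows $x_j$ within $2\epsilon_{k+1}\leq 2\epsilon$, the variation bound (\ref{et1}) gives
\begin{equation*}
\sum_{i=0}^{n_{k+1}-1} f\bigl(T^{\,\cdot+i}x\bigr)\;\leq\; S_{n_{k+1}}f(x_j)+n_{k+1}\cdot \mathrm{Var}(f;2\epsilon)\;\leq\; S_{n_{k+1}}f(x_j)+\tfrac{\eta}{4}\,n_{k+1}.
\end{equation*}
On the initial block and on each gap, I would simply bound $f$ by $\|f\|$.

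Summing and using that there are $N_{k+1}$ total gaps of length $m_{k+1}$ in the second portion, one gets
\begin{equation*}
S_{l_{k+1}-m_{k+1}}f(x)\;\leq\; \sum_{s=1}^{N_{k+1}} S_{n_{k+1}}f(x_s) + \|f\|(l_k+R_k) + N_{k+1}\|f\|\,m_{k+1} + \tfrac{\eta}{4}\,N_{k+1}n_{k+1}.
\end{equation*}
The three error terms are now absorbed using the quantitative choices of $\{N_k\}$ and $\{n_k\}$: inequality (\ref{et4}) handles $\|f\|(l_k+R_k)\leq \tfrac{\eta}{4}l_{k+1}$; inequality (\ref{et3}) (with index $k+1$) gives $\|f\|\,m_{k+1}\leq \tfrac{\eta}{4}n_{k+1}$, whence $N_{k+1}\|f\|\,m_{k+1}\leq \tfrac{\eta}{4}N_{k+1}n_{k+1}\leq \tfrac{\eta}{4}l_{k+1}$; and the last term is bounded by $\tfrac{\eta}{4}l_{k+1}$ trivially from $N_{k+1}n_{k+1}\leq l_{k+1}$. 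Adding the trailing $+1$ from the definition of $R_{k+1}$ (absorbed by enlarging $N_{k+1}$ if needed so that $1\leq \tfrac{\eta}{4}l_{k+1}$) yields the desired bound $R_{k+1}\leq \sum_s S_{n_{k+1}}f(x_s)+\eta\,l_{k+1}$.

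The only real subtlety is the bookkeeping: correctly locating the initial gap of length $m_{k+1}$ that sits between the ``mother'' shadowing block of length $l_k+R_k$ and the first $x_j$-shadowing block, and verifying that the $N_{k+1}$ copies of $m_{k+1}$ summed together can be dominated by $\tfrac{\eta}{4}l_{k+1}$ using (\ref{et3}) rather than (\ref{et2}). Once that accounting is done, the estimate is automatic.
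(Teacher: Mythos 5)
Your proposal is correct and follows essentially the same route as the paper: bound $R_{k+1}$ by $\sup S_{l_{k+1}-m_{k+1}}f+1$, split the Birkhoff sum along the mother block, the $m_{k+1}$-gaps and the $N_{k+1}$ shadowing blocks, and absorb the four error terms $(l_k+R_k)\|f\|$, $N_{k+1}m_{k+1}\|f\|$, $N_{k+1}n_{k+1}\mathrm{Var}(f;2\epsilon)$ and the trailing $1$ each into $\tfrac{\eta}{4}l_{k+1}$ via (\ref{et4}), (\ref{et3}), (\ref{et1}) and the choice of $N_{k+1}$ (the paper's (\ref{et2})). Your bookkeeping of the gaps and blocks matches the paper's estimate (\ref{rkgu1}), so there is nothing to add.
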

\begin{proof}
From (\ref{rrk}), one has
 \begin{align}\label{rkgu1}
 \begin{split}
   R_{k+1}\leq& \sup_{x\in H(\underline{x}_1, \cdots, \underline{x}_k, \underline{x}_{k+1} ;  2\epsilon_{k+1} )}S_{l_{k+1}-m_{k+1}}f(x) + 1\\
  \leq & \sup_{x\in B_{l_{k}+R_k}(z(\underline{x}_1, \cdots, \underline{x}_k),   2\epsilon_{k+1} )}S_{l_{k }+R_k}f(x)+ \sum_{s=1}^{N_{k+1}}S_{n_{k+1}}f(x_{s})\\
  &+ N_{k+1}n_{k+1}\text{Var}(f;  2\epsilon_{k+1})+N_{k+1}m_{k+1}\|f\| +1\\
  \leq &  (l_{k}+R_k)\|f\|+ \sum_{s=1}^{N_{k+1}}S_{n_{k+1}}f(x_{s})\\
  &+ N_{k+1}n_{k+1}\text{Var}(f;  2\epsilon_{k+1})+N_{k+1}m_{k+1}\|f\| +1.\\
   \end{split}
 \end{align}
We continue the estimates of the upper bound of $R_{k+1}$.
From (\ref{et4}),
\begin{align}\label{rre1}
(l_{k}+R_k)\|f\|\leq  \frac{l_{k+1} \eta}{4}.
\end{align}
By (\ref{et1}),
 \begin{align}\label{rre2}
 N_{k+1}n_{k+1}\text{Var}(f;  2\epsilon_{k+1})\leq N_{k+1}n_{k+1}\text{Var}(f;  2\epsilon)<\frac{l_{k+1}\eta}{4 }.
\end{align}
By (\ref{et3}),
 \begin{align}\label{rre3}
N_{k+1}m_{k+1}\|f\|= l_{k+1}\frac{N_{k+1}m_{k+1}}{l_{k+1}}\|f\|\leq l_{k+1}\frac{ m_{k+1}}{n_{k+1}}\|f\|\leq \frac{l_{k+1}\eta}{4 }.
\end{align}
By (\ref{et2}),
\begin{align}\label{rre4}
 1   \leq \frac{l_{k+1}}{N_{k+1}}<\frac{l_{k+1} \eta}{4}.
\end{align}
Combing (\ref{rkgu1}),   (\ref{rre1}), (\ref{rre2}), (\ref{rre3}) with (\ref{rre4}), we have
\begin{align*}
\begin{split}
R_{k+1}\leq  & \frac{l_{k+1} \eta}{4}+\sum_{s=1}^{N_{k+1}}S_{n_{k+1}}f(x_{s})+\frac{l_{k+1} \eta}{4}+\frac{l_{k+1} \eta}{4} +\frac{l_{k+1} \eta}{4} \\
=&  \sum_{s=1}^{N_{k+1}}S_{n_{k+1}}f(x_{s})+ l_{k+1} \eta.
\end{split}
\end{align*}
\end{proof}

 \begin{lem}\label{jicheng}
If $z\in \mathcal{C}_{k+1} $ descends from  $x\in \mathcal{C}_k$ then
$$\overline{B}_{l_{k+1}}(z,  \epsilon_{k+1} )\subset \overline{B}_{l_k+R_k}(x, \epsilon_k )\subset \overline{B}_{l_k}(x, \epsilon_k ).$$
\begin{proof}
Since $z\in \mathcal{C}_{k+1} $ descends from  $x\in \mathcal{C}_k$, we have $d_{l_{k}+R_{k}}(z,x)<\epsilon_{k+1}.$
For any $y\in B_{l_{k+1}}(z,  \epsilon_{k+1} ),$
\begin{align*}
\begin{split}
d_{l_{k }+R_{k }}(y, x)&\leq  d_{l_{k }+R_{k }}(y, z)+d_{l_{k }+R_{k }}(z, x)\\
&\leq  \epsilon_{k+1}+ \epsilon_{k+1}  \\
&\leq \epsilon_k.
\end{split}
\end{align*}
So $\overline{B}_{l_{k+1}}(z, \epsilon_{k+1} )\subset \overline{B}_{l_k+R_{k}}(x, \epsilon_k ).$ In addition, $\overline{B}_{l_k+R_k}(x, \epsilon_k )\subset \overline{B}_{l_k}(x, \epsilon_k )$ is obvious.
\end{proof}
\end{lem}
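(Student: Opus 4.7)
The plan is to read off from the construction the shadowing data packaged into the phrase ``descends from'' and then combine it with the triangle inequality in the Bowen metric, using the halving convention $\epsilon_j = \epsilon/2^j$ to absorb the factor of two. Write $x = z(\underline{x}_1,\ldots,\underline{x}_k)$ and $z = z(\underline{x}_1,\ldots,\underline{x}_k,\underline{x}_{k+1})$. By construction $z \in D(\underline{x}_1,\ldots,\underline{x}_{k+1};\epsilon_{k+1}) \subset H(\underline{x}_1,\ldots,\underline{x}_{k+1};\epsilon_{k+1})$, and the first factor in the definition of $H$ is exactly $B_{l_k+R_k}(x,\epsilon_{k+1})$. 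So the construction already delivers the key estimate $d_{l_k+R_k}(z,x) \leq \epsilon_{k+1}$ for free.

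For the first inclusion I would fix an arbitrary $y \in \overline{B}_{l_{k+1}}(z,\epsilon_{k+1})$ and use monotonicity of the Bowen metric in the time horizon: since $l_k+R_k \leq l_{k+1}$, one has $d_{l_k+R_k}(y,z) \leq d_{l_{k+1}}(y,z) \leq \epsilon_{k+1}$. The triangle inequality then gives
$$d_{l_k+R_k}(y,x) \;\leq\; d_{l_k+R_k}(y,z) + d_{l_k+R_k}(z,x) \;\leq\; 2\epsilon_{k+1} \;=\; \epsilon_k,$$
whence $y \in \overline{B}_{l_k+R_k}(x,\epsilon_k)$. The second inclusion $\overline{B}_{l_k+R_k}(x,\epsilon_k) \subset \overline{B}_{l_k}(x,\epsilon_k)$ is immediate from $l_k \leq l_k+R_k$ and the obvious fact that Bowen balls shrink as the time horizon grows.

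There is no genuine obstacle; the verification is purely mechanical. The one point worth flagging is the bookkeeping between $\epsilon_{k+1}$ and $2\epsilon_{k+1}$: the enclosing set $H(\cdot;\,2\epsilon_{k+1})$ used in the definition of $R_{k+1}$ via \eqref{rrk} is inflated, but the actual selection places $z$ inside $D(\cdot;\,\epsilon_{k+1})$, giving the tighter estimate $d_{l_k+R_k}(z,x)\leq\epsilon_{k+1}$ that is needed for the triangle inequality to close with constant $2\epsilon_{k+1} = \epsilon_k$. This is precisely what the halving $\epsilon_{k+1} = \epsilon_k/2$ is designed for, and it is why the nesting propagates through every level of the Moran construction without the radius blowing up.
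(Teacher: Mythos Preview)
Your proof is correct and follows essentially the same route as the paper's: extract $d_{l_k+R_k}(z,x)\leq\epsilon_{k+1}$ from the fact that $z$ lies in the first factor $B_{l_k+R_k}(x,\epsilon_{k+1})$ of $H(\underline{x}_1,\ldots,\underline{x}_{k+1};\epsilon_{k+1})$, then apply the triangle inequality together with the monotonicity $d_{l_k+R_k}\leq d_{l_{k+1}}$ and the halving $2\epsilon_{k+1}=\epsilon_k$. If anything, your write-up is slightly more explicit than the paper's about why $l_k+R_k\leq l_{k+1}$ justifies the comparison of Bowen metrics, and about working directly with closed balls throughout.
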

Let $F_k=\bigcup_{x\in \mathcal{C}_k}\overline{B}_{l_k+R_k}(x, \epsilon_k)$. By lemma \ref{jicheng}, $F_{k+1}\subset F_k$. Since we have a decreasing sequence of compact sets, the intersection $$F=\bigcap_kF_k$$ is non-empty.
\begin{lem}
$F\subset  \mathbf D^{x_0 }_f .$
\begin{proof}
 Let $p\in F$. There exists a sequence $\{z_{k}\}_{k\geq1}$ with $z_k\in \mathcal{C}_k$  and $z_k$ descends from $z_{k-1}$ for each $k\geq 2$ satisfying  Lemma \ref{jicheng} such that  $p\in \overline{B}_{l_k+R_k}(z_k, \epsilon_k)$   for each $k\geq 1 $.  For $k\geq 1,$ assume  $z_k=z(\underline{x}_1, \underline{x}_2,\cdots \underline{x}_k)\in \mathcal{C}_k$.
 Hence, we have
 \begin{align*}
 \begin{split}
 d_{R_{k+1}}(T^{l_{k+1}}p, x_0)\leq & d_{R_{k+1}}(T^{l_{k+1}}p, T^{l_{k+1}}z_{k+1})+d_{R_{k+1}}(T^{l_{k+1}}z_{k+1}, x_0)\leq \epsilon_{k+1}+\epsilon_{k+1}=2\epsilon_{k+1}.
 \end{split}
 \end{align*}
 So we have
\begin{align}\label{22222}
 T^{l_{k+1}}p\in B_{R_{k+1}}(  x_0, 2\epsilon_{k+1} ).
 \end{align}
 It follows the construction of the fractal $F$, we can see that $z_k\in H(\underline{x}_1, \cdots, \underline{x}_k ;  \epsilon_{k } )$ for each $k\geq1$.
Assume $\underline{x}_{k+1}=(x_1, \cdots, x_{N_{k+1}})$. Combing with the fact that $p\in \overline{B}_{l_{k }+R_{k}}(z_{k},  \epsilon_{k} )$ for every $k\geq 1$ and
\begin{align*}
 \begin{split}
 &H(\underline{x}_1, \cdots, \underline{x}_{k }, \underline{x}_{k +1} ; \epsilon_{k+1 } ) \\
 =& B_{l_{k}+R_k}(z_{k }, \epsilon_{k+1 } )\bigcap T^{-l_{k }-R_k-m_{k+1}}\Big(
 \cap_{j=1}^{N_{k+1 }} T^{-(j-1)(n_{k+1 }+m_{k +1})}B(x_{ j },\epsilon_{k+1 }  )\Big),
 \end{split}
 \end{align*}
one has
\begin{align}
\begin{split}
d_{l_{k}+R_{k}}(p,z_{k})\leq& d_{l_{k}+R_{k}}(p,z_{k+1})+d_{l_{k}+R_{k}}(z_{k+1},z_{k})\\
\leq& d_{l_{k+1}+R_{k+1}}(p,z_{k+1})+d_{l_{k}+R_{k}}(z_{k+1},z_{k})\\
< & \epsilon_{k+1} + \epsilon_{k+1}=2\epsilon_{k+1},
\end{split}
\end{align}
which implies that
\begin{align}\label{2222}
p\in B_{l_k+R_k}(z_k, 2\epsilon_{k+1}).
\end{align}
Similarly, for each $1\leq j\leq N_{k+1},$
\begin{align}\label{222222}
\begin{split}
&d_{n_{k+1}}(T^{l_{k}+R_{k}+m_{k+1}+(j-1)(n_{k+1}+m_{k+1})}p,x_{j})\\
\leq&d_{n_{k+1}}(T^{l_{k}+R_{k}+m_{k+1}+(j-1)(n_{k+1}+m_{k+1})}p,T^{l_{k}+R_{k}+m_{k+1}+(j-1)(n_{k+1}+m_{k+1})}z_{k+1})\\
& +d_{n_k}(T^{l_{k}+R_{k}+m_{k+1}+(j-1)(n_{k+1}+m_{k+1})}z_{k+1},x_{j})\\
< & \epsilon_{k+1} + \epsilon_{k+1}=2\epsilon_{k+1}.
\end{split}
\end{align}
By (\ref{2222}) and (\ref{222222}), we have
$ p\in H(\underline{x}_1, \cdots, \underline{x}_{k+1} ;  2\epsilon_{k+1 } )$.
By (\ref{rrk}), we have
 \begin{align}\label{222}
 R_k-S_{l_k-m_{k}}f(p)\geq 0.
 \end{align}
It follows from (\ref{22222}) and (\ref{222}),
$$     \dfrac{\max\{t\in\mathbb N:~T^{l_k}(p)\in B_{t}( x_0,2\epsilon_k)\}-S_{l_k}f(p)}{l_k}\geq  \dfrac{ R_k-S_{l_k}f(p)}{l_k}\geq \frac{-m_{k}\|f\|}{l_{k}}   $$ for each $k\geq 1.$
Moreover,
$$     \limsup\limits_{k\to\infty}\dfrac{\max\{t\in\mathbb N:~T^{l_k}(p)\in B_{t}( x_0,2\epsilon_k)\}-S_{l_k}f(p)}{l_k} \geq 0   .$$
Clearly, for any $\epsilon>0$
$$     \limsup\limits_{n\to\infty}\dfrac{\max\{t\in\mathbb N:~T^n(p)\in B_{t}(x_0,2\epsilon)\}-S_nf(p)}{n}\geq 0.$$
Furthermore,
$$    \lim\limits_{\epsilon\to 0}\limsup\limits_{n\to\infty}\dfrac{\max\{t\in\mathbb N:~T^n(p)\in B_{t}(x_0,2\epsilon)\}-S_nf(p)}{n}\geq 0 , $$
which implies that $p\in \mathbf D^{x_0 }_f.$
\end{proof}
\end{lem}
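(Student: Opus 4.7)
The goal is to show that every $p$ in the nested intersection $F=\bigcap_k F_k$ inherits, from the Moran-type construction, both the shadowing of $x_0$ for time $R_k$ after the initial block of length $l_k$, and the near-equality between $R_k$ and $S_{l_k}f(p)$. The plan is to follow the descending sequence $z_k\in\mathcal{C}_k$ with $z_{k+1}$ descending from $z_k$ and $p\in\overline{B}_{l_k+R_k}(z_k,\epsilon_k)$, and to pass each geometric property of $z_k$ to $p$ via the Bowen metric triangle inequality, paying the price of doubling the radius to $2\epsilon_{k+1}$.

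First, from the defining property of $z_{k+1}\in D(\underline{x}_1,\ldots,\underline{x}_{k+1};\epsilon_{k+1})$, one has $d_{R_{k+1}}(T^{l_{k+1}}z_{k+1},x_0)<\epsilon_{k+1}$, and from $p\in\overline{B}_{l_{k+1}+R_{k+1}}(z_{k+1},\epsilon_{k+1})$ one has $d_{R_{k+1}}(T^{l_{k+1}}p,T^{l_{k+1}}z_{k+1})\le\epsilon_{k+1}$; the triangle inequality therefore yields $T^{l_{k+1}}p\in B_{R_{k+1}}(x_0,2\epsilon_{k+1})$. Next I would apply the same triangle inequality argument block by block, using that $z_{k+1}\in H(\underline{x}_1,\ldots,\underline{x}_{k+1};\epsilon_{k+1})$, to show $p\in H(\underline{x}_1,\ldots,\underline{x}_{k+1};2\epsilon_{k+1})$. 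This is the step I expect to be the main obstacle: one must package the pair $(p,z_{k+1})$ in the Bowen metric over the full window $l_{k+1}+R_{k+1}$ and read off that $p$ simultaneously tracks the initial $l_k+R_k$ portion (inherited from $z_k$ via Lemma \ref{jicheng}) and each of the $N_{k+1}$ later blocks tracking $x_1,\dots,x_{N_{k+1}}$.

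Once $p\in H(\underline{x}_1,\ldots,\underline{x}_{k+1};2\epsilon_{k+1})$ is established, the defining inequality (\ref{rrk}) for $R_{k+1}$ gives
\begin{equation*}
R_{k+1}\ \ge\ S_{l_{k+1}-m_{k+1}}f(p)\ \ge\ S_{l_{k+1}}f(p)-m_{k+1}\|f\|.
\end{equation*}
Combined with the shadowing $T^{l_{k+1}}p\in B_{R_{k+1}}(x_0,2\epsilon_{k+1})$ already obtained, this gives
\begin{equation*}
\max\{t\in\mathbb N:T^{l_{k+1}}p\in B_t(x_0,2\epsilon_{k+1})\}\ \ge\ R_{k+1}\ \ge\ S_{l_{k+1}}f(p)-m_{k+1}\|f\|,
\end{equation*}
and hence
\begin{equation*}
\frac{\max\{t\in\mathbb N:T^{l_{k+1}}p\in B_t(x_0,2\epsilon_{k+1})\}-S_{l_{k+1}}f(p)}{l_{k+1}}\ \ge\ -\frac{m_{k+1}\|f\|}{l_{k+1}}.
\end{equation*}

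To conclude, I would take the limsup along the subsequence $n=l_{k+1}\to\infty$; by (\ref{et3}) the right-hand side tends to $0$, so the limsup is $\ge 0$ when the radius is $2\epsilon_{k+1}$. Finally, because $2\epsilon_{k+1}\to 0$, for any fixed $\epsilon>0$ we have $2\epsilon_{k+1}<\epsilon$ for all large $k$, which upgrades the conclusion to
\begin{equation*}
\limsup_{n\to\infty}\frac{\max\{t\in\mathbb N:T^n(p)\in B_t(x_0,\epsilon)\}-S_nf(p)}{n}\ \ge\ 0
\end{equation*}
for every $\epsilon>0$, and letting $\epsilon\to 0$ gives exactly the defining condition of $\mathbf D^{x_0}_f$, proving $p\in\mathbf D^{x_0}_f$.
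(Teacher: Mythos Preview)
Your proposal is correct and follows essentially the same route as the paper's own proof: both select a descending sequence $z_k\in\mathcal C_k$ with $p\in\overline B_{l_k+R_k}(z_k,\epsilon_k)$, use the triangle inequality in the Bowen metric to obtain $T^{l_{k+1}}p\in B_{R_{k+1}}(x_0,2\epsilon_{k+1})$ and $p\in H(\underline x_1,\dots,\underline x_{k+1};2\epsilon_{k+1})$, feed the latter into the defining inequality (\ref{rrk}) to compare $R_{k+1}$ with $S_{l_{k+1}}f(p)$ up to an $m_{k+1}\|f\|$ error, and then pass to the limsup along the subsequence $l_k$ using $\epsilon_k\to 0$. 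One minor remark: when you write ``by (\ref{et3}) the right-hand side tends to $0$'', note that (\ref{et3}) only gives $m_{k+1}\|f\|/n_{k+1}<\eta/4$; to get $m_{k+1}\|f\|/l_{k+1}\to 0$ you also need $l_{k+1}\ge N_{k+1}n_{k+1}$ together with $N_{k+1}\to\infty$, both of which are available from the construction.
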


 \subsection{Construction of  a special sequence of measure $\mu_k$}
 \begin{lem}\label{shadowx}
 Let $z_{k}=z(\underline{x}_1, \cdots, \underline{x}_k)\in \mathcal{C}_{k}.$ For all $i\in\{1,\cdots,k\},$ put $z_{i}=z(\underline{x}_1, \cdots, \underline{x}_i)\in \mathcal{C}_{i}.$  Then we have

 $$d_{l_{i} }(z_{k},z_{i})<\epsilon_{i}.$$

\begin{proof}
Set $z_1:=z(\underline{x}_1 ), z_2=z(\underline{x}_1,  \underline{x}_2), z_i=z(\underline{x}_1, \cdots, \underline{x}_i), z_k=z$ for $z=z(\underline{x}_1, \cdots, \underline{x}_k).$
For $1\leq i\leq k-1,$
\begin{align*}
\begin{split}
&d_{l_{i}}(z_{k},z_{i})\\
\leq& d_{l_i}(z_{k},z_{k-1})+d_{l_i}(z_{k-1},z_{k-2})+\cdots+d_{l_i}(z_{i+1},z_{i})\\
\leq& d_{l_{k-1}+R_{k-1}}(z_{k},z_{k-1})+d_{l_{k-2}+R_{k-2}}(z_{k-1},z_{k-2})+\cdots+d_{l_{i}+R_{i}}(z_{i+1},z_{i})\\
< & \epsilon_k + \epsilon_{k-1} +\cdots + \epsilon_{i+1} \leq \epsilon_i.
\end{split}
\end{align*}
\end{proof}
\end{lem}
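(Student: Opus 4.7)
The plan is to chain the one-step descent estimates together with the triangle inequality for the Bowen metric $d_{l_i}$. Set $z_j := z(\underline{x}_1,\ldots,\underline{x}_j) \in \mathcal{C}_j$ for each $j \in \{i, i+1, \ldots, k\}$, so that along the sequence $z_i, z_{i+1}, \ldots, z_k$ each point descends from its predecessor. The lemma will then fall out of a telescoping estimate plus the geometric decay $\epsilon_j = \epsilon/2^j$.

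First I would note that, by the very construction of $\mathcal{C}_{j+1}$ from $\mathcal{C}_j$, the descendant $z_{j+1}$ lies in $\overline{B}_{l_j + R_j}(z_j, \epsilon_{j+1})$: indeed $z_{j+1}$ is picked inside $D(\underline{x}_1,\ldots,\underline{x}_{j+1};\epsilon_{j+1})$, which by definition sits inside $H(\underline{x}_1,\ldots,\underline{x}_{j+1}; \epsilon_{j+1})$, whose first factor is exactly that ball. Since the recursion $l_{k+1} = l_k + R_k + m_{k+1} + N_{k+1}(m_{k+1}+n_{k+1})$ makes $\{l_j\}$ increasing, one has $l_i \leq l_j \leq l_j + R_j$ whenever $j \geq i$, so the inclusion of Bowen balls gives
$$d_{l_i}(z_{j+1}, z_j) \leq d_{l_j + R_j}(z_{j+1}, z_j) < \epsilon_{j+1}.$$

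Then I would apply the triangle inequality for $d_{l_i}$ along the chain and sum the resulting geometric series:
$$d_{l_i}(z_k, z_i) \leq \sum_{j=i}^{k-1} d_{l_i}(z_{j+1}, z_j) < \sum_{j=i+1}^{k} \frac{\epsilon}{2^j} < \frac{\epsilon}{2^i} = \epsilon_i,$$
which is the claim. I do not anticipate any real obstacle: the content is essentially bookkeeping, and the only points that need a moment of care are the index shift in the telescoping sum and verifying that $l_i \leq l_j + R_j$ for every $j \geq i$, both of which are immediate from the recursive definition of $l_j$.
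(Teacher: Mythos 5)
Your proof is correct and follows essentially the same route as the paper: the one-step bound $d_{l_j+R_j}(z_{j+1},z_j)<\epsilon_{j+1}$ coming from the construction of $D(\underline{x}_1,\ldots,\underline{x}_{j+1};\epsilon_{j+1})$, the monotonicity $d_{l_i}\leq d_{l_j+R_j}$ for $j\geq i$, the triangle inequality along the chain, and the geometric bound $\sum_{j=i+1}^{k}\epsilon_j\leq\epsilon_i$. No gaps; your explicit justification of the one-step estimate and of $l_i\leq l_j+R_j$ is just a slightly more detailed write-up of the paper's argument.
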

The following lemma will show the fact that
 $\#\mathcal{C}_k=\#\mathcal{C}_{k-1}\times\#\mathcal{S}_k=\#\mathcal{S}_1\times \cdots \times \#\mathcal{S}_k.$

\begin{lem}\label{jicheng2}
 Let distinct $ (\underline{x}_1, \cdots, \underline{x}_k )$ and  $ (\underline{y}_1, \cdots, \underline{y}_k)\in \mathcal{S}_1^{N_1}\times  \cdots \times\mathcal{S}_k^{N_k}$.
Then $$z(\underline{x}_1, \cdots, \underline{x}_k )\neq z(\underline{y}_1, \cdots, \underline{y}_k).$$
Furthermore, $\mathcal{C}_{j+1}(z)$ is a $(l_{j+1},5\epsilon)$-separated set for each $z\in\mathcal{C}_{j}$ and each $j\geq1.$
 \begin{proof}
 Since  $(\underline{x}_1, \cdots, \underline{x}_{k})\neq (\underline{y}_1, \cdots, \underline{y}_{k}) ,$ There exists $1\leq j\leq k$ such that $\underline{x}_1=\underline{y}_1, \cdots \underline{x}_j=\underline{y}_j, \underline{x}_{j+1}\neq \underline{y}_{j+1}$. Set $p:= z(\underline{x}_1, \cdots, \underline{x}_{j+1} )$ and  $q:=z(\underline{y}_1, \cdots, \underline{y}_{j+1}) $.
Let $\underline{x}_{j+1} =(x_1, \cdots, x_{N_j+1})$ and $\underline{y}_{j+1} =(y_1, \cdots, y_{N_j+1})$.  Assume the $t=\max\{s: x_s\neq y_s\}$ and $c_{j+1}=l_{j+1}-(l_j+R_j+m_{j+1})$
 \begin{align*}
 \begin{split}
 &d_{l_{j+1} }\Big(p, q\Big)\\>& d_{c_{j+1} }\Big(T^{l_j+R_j+m_{j+1}}p, T^{l_j+R_j+m_{j+1}}q\Big)\\
 \geq&   d_{n_{j+1} }\Big(T^{l_j+R_j+m_{j+1}+(t-1)(m_{j+1}+n_{j+1})}p, T^{l_j+R_j+m_{j+1}+(t-1)(m_{j+1}+n_{j+1})}q\Big)\\
 \geq &d_{n_{j+1} }\Big(x_{t} , y_t\Big)-d_{n_{j+1} }\Big(T^{l_j+R_j+m_{j+1}+(t-1)(m_{j+1}+n_{j+1})}p, x_{t}\Big) \\
 &-d\Big(x_t, T^{l_j+R_j+m_{j+1}+(t-1)(m_{j+1}+n_{j+1})}q\Big)\\
  \geq & 6\epsilon - \epsilon_{j+1} - \epsilon_{j+1} \\
  =& 5\epsilon .
\end{split}
\end{align*}
Let $z_j:=z(\underline{x}_1, \cdots, \underline{x}_j )$ for each $j\geq1$, and $\mathcal{C}_{j+1}(z_j)=\{z(\underline{x}_1, \cdots, \underline{x}_j, \underline{x}_{j+1}): \underline{x}_{j+1}\in \mathcal{S}_{j+1}^{N_{j+1}} \}$. It turns out that
$\mathcal{C}_{j+1}(z_j)$ is $(l_{j+1}, 5\epsilon)$ separated set.
Using Lemma \ref{shadowx}, we obtain $z(\underline{x}_1, \cdots, \underline{x}_k )\in B_{l_{j+1}}(p, \epsilon_{j+1})$ and $z(\underline{y}_1, \cdots, \underline{y}_k )\in B_{l_{j+1}}(q, \epsilon_{j+1})$.
Then
\begin{align*}
 \begin{split}
 &d_{l_{j+1} }\Big(z(\underline{x}_1, \cdots, \underline{x}_k ), z(\underline{y}_1, \cdots, \underline{y}_k )\Big)\\
 \geq &d_{l_{j+1} }(p, q)-d_{l_{j+1} }(p,z(\underline{x}_1, \cdots, \underline{x}_k ) )- d_{l_{j+1}}(q,  z(\underline{y}_1, \cdots, \underline{y}_k ))\\
 \geq & 5\epsilon-\epsilon_{j+1}-\epsilon_{j+1}
\\
\geq & 3\epsilon.
\end{split}
\end{align*}
 We finish the proof.
\end{proof}
 \end{lem}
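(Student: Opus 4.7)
The plan is to locate the first ``block'' where the two tuples disagree and exploit the separation of the data sets $\mathcal{S}_k$ together with the shadowing control provided by the specification construction. Since $(\underline{x}_1,\ldots,\underline{x}_k)\neq (\underline{y}_1,\ldots,\underline{y}_k)$, there is a smallest index $j+1\leq k$ with $\underline{x}_{j+1}\neq\underline{y}_{j+1}$; write $\underline{x}_{j+1}=(x_1,\ldots,x_{N_{j+1}})$ and $\underline{y}_{j+1}=(y_1,\ldots,y_{N_{j+1}})$, and pick a coordinate $t$ where $x_t\neq y_t$. Since both belong to the $(n_{j+1},6\epsilon)$-separated set $\mathcal{S}_{j+1}$, we have $d_{n_{j+1}}(x_t,y_t)\geq 6\epsilon$.

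Next, set $p:=z(\underline{x}_1,\ldots,\underline{x}_{j+1})$ and $q:=z(\underline{y}_1,\ldots,\underline{y}_{j+1})$. By the construction, $p\in H(\underline{x}_1,\ldots,\underline{x}_{j+1};\epsilon_{j+1})$, so the orbit of $p$ $\epsilon_{j+1}$-shadows $x_t$ for $n_{j+1}$ iterates starting at time
\[
\tau_t:=l_j+R_j+m_{j+1}+(t-1)(n_{j+1}+m_{j+1}),
\]
and the analogous statement holds for $q$ with $y_t$. Applying the triangle inequality in the Bowen metric $d_{n_{j+1}}$ at the time $\tau_t$ gives
\[
d_{n_{j+1}}(T^{\tau_t}p,T^{\tau_t}q)\geq d_{n_{j+1}}(x_t,y_t)-d_{n_{j+1}}(T^{\tau_t}p,x_t)-d_{n_{j+1}}(T^{\tau_t}q,y_t)\geq 6\epsilon-2\epsilon_{j+1}\geq 5\epsilon,
\]
using $\epsilon_{j+1}=\epsilon/2^{j+1}\leq \epsilon/2$. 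Because $\tau_t+n_{j+1}\leq l_{j+1}$, this forces $d_{l_{j+1}}(p,q)\geq 5\epsilon$, which is precisely the second assertion: $\mathcal{C}_{j+1}(z)$ is $(l_{j+1},5\epsilon)$-separated for each $z\in\mathcal{C}_j$.

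Finally, for the distinctness statement, I would propagate this separation up to level $k$ using Lemma \ref{shadowx}. Since $z(\underline{x}_1,\ldots,\underline{x}_k)$ descends from $p$, Lemma \ref{shadowx} yields $d_{l_{j+1}}(z(\underline{x}_1,\ldots,\underline{x}_k),p)<\epsilon_{j+1}$, and similarly $d_{l_{j+1}}(z(\underline{y}_1,\ldots,\underline{y}_k),q)<\epsilon_{j+1}$. A final triangle inequality gives
\[
d_{l_{j+1}}\bigl(z(\underline{x}_1,\ldots,\underline{x}_k),z(\underline{y}_1,\ldots,\underline{y}_k)\bigr)\geq 5\epsilon-2\epsilon_{j+1}\geq 3\epsilon>0,
\]
so the two points are distinct. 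The only delicate bookkeeping is keeping track of the correct time offset $\tau_t$ inside the $(j+1)$-th stage of the construction and verifying that the geometric decay $\epsilon_{k}=\epsilon/2^{k}$ leaves a definite gap after two triangle-inequality losses; this is the main (mild) obstacle, and it is handled by the choice of scales $\epsilon_{j+1}\leq \epsilon$.
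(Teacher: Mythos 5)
Your proposal is correct and follows essentially the same route as the paper: locate the first block where the tuples differ, use that $\mathcal{S}_{j+1}$ is $(n_{j+1},6\epsilon)$-separated together with the $\epsilon_{j+1}$-shadowing from the construction to get $d_{l_{j+1}}(p,q)\geq 5\epsilon$ (the separation claim), then propagate via Lemma \ref{shadowx} and a final triangle inequality to get $d_{l_{j+1}}\geq 3\epsilon>0$ for the level-$k$ points. Your explicit use of $\epsilon_{j+1}\leq\epsilon/2$ even cleans up the paper's slightly loose ``$=5\epsilon$'' step.
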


Finally, we define a probability  measure on $F$ which yield the required estimates for the entropy Distribution Principle. For each $z\in \mathcal{C}_k$, we associate a number $\mathcal{L}_k(z)\in (0, \infty)$. Using  the function $f$  as weights, we define, for each $k$, an atomic measure centred on $\mathcal{C}_k$. Precisely, if $z=z(\underline{x}_1, \cdots, \underline{x}_k)$, define
$$\mathcal{L}_k(z): =\mathcal{L} ( \underline{x}_1)\cdots \mathcal{L} ( \underline{x}_k),$$
where if $\underline{x}_i=(x_1^i,\cdots, x_{N_i}^i)\in  \mathcal{S}_i^{N_i},$ then
$$\mathcal{L}(\underline{x}_i):=  \prod_{s=1}^{N_{i}}\exp S_{n_i}(1+f)( x^i_s ).$$
We define
$$\nu_k:=\sum_{z\in\mathcal{C}_k}\delta_{z}\mathcal{L}_k(z) .$$
We normalise $\nu_k$ to obtain a sequence of probability measures $\mu_k$. More precisely, we let $\mu_k :=\frac{1}{\kappa_k}\nu_k$ where $\kappa_k$ is the normalising constant
$$\kappa_k:= \sum_{z\in \mathcal{C}_k}\mathcal{L}_k(z) .$$
\begin{lem}
$\kappa_k=  M_1^{N_1}\cdots  M_k^{N_k}.$
\begin{proof}
Clearly, we have
\begin{eqnarray*}
  % \nonumber to remove numbering (before each equation)
    \sum_{\underline{x}_i\in \mathcal{S}_i^{N_i}}\mathcal{L}(\underline{x}_i) &=& \sum_{x_1^i \in \mathcal{S}_i} \exp S_{n_i}(f+1)(x_{ 1 }^i)\cdots  \sum_{x_1^i \in \mathcal{S}_i} \exp S_{n_i}(f+1)(x_{ N_{i} }^i) \\
      &=& M_i^{N_i}
  \end{eqnarray*}
 Since each $z\in \mathcal{C}_k$ corresponds uniquely to a sequence $(\underline{x}_1, \underline{x}_2,\cdots, \underline{x}_k).$ We have
  $$\sum_{z\in \mathcal{C}_k}\mathcal{L}_k(z)=\sum_{\underline{x}_1\in \mathcal{S}_1^{N_1}}\cdots \sum_{\underline{x}_k\in \mathcal{S}_k^{N_k}}\mathcal{L}(\underline{x}_1)\cdots \mathcal{L}(\underline{x}_k).$$
  The result follows.
\end{proof}

\end{lem}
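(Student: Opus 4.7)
The plan is straightforward: unfold the definition of $\kappa_k$, use the injectivity of the labelling $(\underline{x}_1,\ldots,\underline{x}_k)\mapsto z(\underline{x}_1,\ldots,\underline{x}_k)$ supplied by Lemma \ref{jicheng2} to reindex the sum, and then split a $k$-fold sum of products into a product of one-variable sums. No analytic input is needed beyond the definitions and the injectivity already proved.

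First I would convert the sum over $\mathcal{C}_k$ into a sum over the combinatorial index set. By construction every $z\in\mathcal{C}_k$ arises as $z(\underline{x}_1,\ldots,\underline{x}_k)$ for some $(\underline{x}_1,\ldots,\underline{x}_k)\in\mathcal{S}_1^{N_1}\times\cdots\times\mathcal{S}_k^{N_k}$, and Lemma \ref{jicheng2} shows this map is injective. Hence the assignment is a bijection and
\begin{align*}
\kappa_k = \sum_{z\in\mathcal{C}_k}\mathcal{L}_k(z)
 = \sum_{\underline{x}_1\in\mathcal{S}_1^{N_1}}\cdots\sum_{\underline{x}_k\in\mathcal{S}_k^{N_k}}\mathcal{L}(\underline{x}_1)\cdots\mathcal{L}(\underline{x}_k).
\end{align*}
Because the summand is a product of factors depending on disjoint groups of variables, the sum separates:
\begin{align*}
\kappa_k = \prod_{i=1}^{k}\Bigl(\sum_{\underline{x}_i\in\mathcal{S}_i^{N_i}}\mathcal{L}(\underline{x}_i)\Bigr).
\end{align*}

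Next I would evaluate each inner sum. For fixed $i$ write $\underline{x}_i=(x_1,\ldots,x_{N_i})$ and recall $\mathcal{L}(\underline{x}_i)=\prod_{s=1}^{N_i}\exp S_{n_i}(1+f)(x_s)$. Distributing the sum coordinate by coordinate gives
\begin{align*}
\sum_{\underline{x}_i\in\mathcal{S}_i^{N_i}}\mathcal{L}(\underline{x}_i)
 = \prod_{s=1}^{N_i}\sum_{x_s\in\mathcal{S}_i}\exp S_{n_i}(f+1)(x_s)
 = M_i^{N_i},
\end{align*}
by the very definition of $M_i$ in (\ref{mma}). Assembling the factors yields $\kappa_k=M_1^{N_1}\cdots M_k^{N_k}$.

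There is no real obstacle here: the substantive content, namely that distinct index tuples produce distinct points of $\mathcal{C}_k$ (so that no double counting occurs in the reindexing), is precisely Lemma \ref{jicheng2}, which itself relies on the $6\epsilon$-separation of each $\mathcal{S}_k$ and the fact that $\epsilon_{k+1}=\epsilon/2^{k+1}$ is much smaller than $\epsilon$ so the specification-shadowing perturbations cannot collapse sister elements. Once that injectivity is available, the present lemma reduces to purely formal manipulation of products and sums.
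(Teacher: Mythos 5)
Your proposal is correct and matches the paper's argument: both use the bijection between $\mathcal{C}_k$ and $\mathcal{S}_1^{N_1}\times\cdots\times\mathcal{S}_k^{N_k}$ (injectivity being Lemma \ref{jicheng2}) to reindex $\kappa_k$, and then factor the resulting multiple sum into $M_1^{N_1}\cdots M_k^{N_k}$. The only difference is the order of the two formal steps, which is immaterial.
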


\begin{lem}
Suppose that $\nu$ is a limit measure of the sequence of probability measure $\mu_k.$ Then $\nu(F)=1.$
\begin{proof}
Suppose $\mu$ is a limit measure of the sequence of probability measures $\mu_k$. Then $\mu=\lim\limits_{k}\mu_{s_k}$ for some $s_k\to\infty$. For some fixed $s$ and all $p\geq 0$, $\mu_{s+p}( F_s)=1$ since $\mu_{s+p}( F_{s+p})=1$ and $  F_{s+p}\subset   F_{s}$. Therefore, $\mu(  F_{s})\geq \limsup\limits_{k\to\infty}\mu_{s_k}(  F_{s})=1.$ It follows that $\mu( F )=\lim\limits_{s\to\infty}\mu(  F_{s})=1.$
\end{proof}
\end{lem}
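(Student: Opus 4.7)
The plan is to exploit the nested structure of the compact sets $F_k$ together with weak-$*$ upper semicontinuity on closed sets. The key observation is that each $\mu_k$ is, by construction, an atomic probability measure supported on the finite set $\mathcal{C}_k$, and since $F_k = \bigcup_{x\in\mathcal{C}_k}\overline{B}_{l_k+R_k}(x,\epsilon_k)$ we have $\mathcal{C}_k\subset F_k$; by Lemma \ref{jicheng} the sequence is nested, $F_{k+1}\subset F_k$, with $F = \bigcap_k F_k$.

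First I would fix an index $s$ and observe that for every $p\geq 0$ the support of $\mu_{s+p}$ lies in $\mathcal{C}_{s+p}\subset F_{s+p}\subset F_s$, so $\mu_{s+p}(F_s)=1$. Next, let $\nu = \lim_{j\to\infty}\mu_{k_j}$ be any weak-$*$ limit along a subsequence $k_j\to\infty$; such a subsequence exists since $M(X)$ is weak-$*$ compact. Because $F_s$ is closed, the Portmanteau theorem yields
\[
\nu(F_s) \;\geq\; \limsup_{j\to\infty} \mu_{k_j}(F_s) \;=\; 1,
\]
where the equality holds once $k_j\geq s$. Hence $\nu(F_s)=1$ for every $s$. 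Since $F_s\searrow F$ with each $F_s$ compact, continuity of $\nu$ from above gives $\nu(F) = \lim_{s\to\infty}\nu(F_s) = 1$, which is the desired conclusion.

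I do not expect any genuine obstacle in this step: the content is essentially measure-theoretic bookkeeping, and the Moran-style nesting has already been arranged in the preceding lemmas. The one subtlety worth flagging is the direction of the Portmanteau inequality — we must use upper semicontinuity on closed sets, which yields a lower bound on the limit measure, and this is exactly what we need because the prelimit masses $\mu_{k_j}(F_s)$ are all equal to $1$. Attempting to use openness or the opposite direction of the Portmanteau theorem would not deliver the required bound $\nu(F_s)\geq 1$.
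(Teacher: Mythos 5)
Your argument is correct and follows essentially the same route as the paper: both use that $\mu_{s+p}$ is supported in $F_{s+p}\subset F_s$ (so the prelimit masses equal $1$), then upper semicontinuity of weak-$*$ limits on the closed set $F_s$ to get $\nu(F_s)\geq\limsup_k\mu_{s_k}(F_s)=1$, and finally continuity from above along the nested sequence $F_s\searrow F$. Your version merely makes the Portmanteau step and the inclusion $\mathcal{C}_k\subset F_k$ explicit, which the paper leaves implicit.
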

 In order to prove Theorem \ref{main}, we give a sequence of lemmas which will allow us to apply the generalised Entropy Distribution Principle. Let $B_{n}(q,  \frac{\epsilon}{2} )$ be an arbitrary ball which intersects $F.$
There exists a sequence $\{z_k\}_{k\geq1}$ with $z_k\in \mathcal{C}_k$  such that $z_i\in B_{n}(q,  \frac{\epsilon}{2} )$ for  $i$ is large enough.
The sequence  $\{z_k\}_{k\geq 1}$ determine the sequence $\{l_k\}_{k\geq 1}\subset \mathbb N$
 and $\{R_k\}_{k\geq 1}\subset \mathbb N $ constructed as above.
Let $k$ be the unique number which satisfies $l_k\leq n< l_{k+1}$.
\begin{lem}\label{le1}
Suppose $\mu_{k+1}(B_n(q,  \frac{\epsilon}{2}))>0$, then     there exists $z_k\in \mathcal{C}_k$  (a unique choice ) such that
$$B_n(q,  \frac{\epsilon}{2} )\cap \mathcal{C}_{k+1}\subset  \mathcal{C}_{k+1}(z_k).$$
\begin{proof}
Suppose that there exist $z'\neq z_k , $ with $z', z_k\in \mathcal{C}_{k}$ satisfy that
$B_n(q,  \frac{\epsilon}{2} )\cap  \mathcal{C}_{k+1}(z')\neq \emptyset $  and $B_n(q,  \frac{\epsilon}{2} )\cap  \mathcal{C}_{k+1}(z_k )\neq \emptyset $. Hence, we can choose $q_1, q_2\in B_{n}(q, \frac{\epsilon}{2})$, and $$d_{l_k}(z_k, z')\leq  d_{l_k}(z_k,  q_1)+d_{l_k}(q_1,q)+d_{l_k}(q, q_2)+d_{l_k}(q_2, z')
 \leq  \epsilon_k +\frac{\epsilon_k}{2}+\frac{\epsilon_k}{2}+ \epsilon_k  =3\epsilon .$$
By the construction, the exists $1\leq i\leq k$, such that $z_i, z'_i\in \mathcal{C}_i$ are 'sisters' and $z_k$ descends from $z_i$, $z'$ descends from $z'_i$.
We observe that
\begin{align*}
\begin{split}
d_{l_i}(z_i, z'_i)\leq &d_{l_i}(z_i,z_{i+1} ) + \cdots d_{l_i}(z_{k-1}, z_k)   \\
&+d_{l_i}(  z_k ,  z ')   + d_{l_i}(z', z'_{k-1})+ \cdots +d_{l_i}(z'_{i+1}, z'_{i})\\
\leq &  \epsilon_{i+1} +\cdots  \epsilon_{k} + 3\epsilon +  \epsilon_{k} +\cdots  \epsilon_{i+1 } \\
\leq& 2\epsilon_{i+1}+ 3\epsilon +2\epsilon_{i+1}\\
< & 5\epsilon,
\end{split}
\end{align*}
which contract the fact that $ \mathcal{C}_i(z_{i-1})$ is $(l_i, 5\epsilon)$ separated set in Lemma \ref{jicheng2}.
Hence, we have $$B_n(q,  \frac{\epsilon}{2} )\cap \mathcal{C}_{k+1}\subset  \mathcal{C}_{k+1}(z_k).$$
\end{proof}
\end{lem}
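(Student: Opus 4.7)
The plan is to argue by contradiction: suppose two distinct mothers $z_k, z_k' \in \mathcal{C}_k$ both have descendants in $B_n(q,\epsilon/2) \cap \mathcal{C}_{k+1}$, and derive a $d_{l_i}$-closeness of two sister points at some level $i$, which contradicts the separation property furnished by Lemma \ref{jicheng2}. The measure positivity assumption $\mu_{k+1}(B_n(q,\epsilon/2))>0$ merely guarantees that such descendants exist in the ball.

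First I would pick $p_1 \in \mathcal{C}_{k+1}(z_k) \cap B_n(q,\epsilon/2)$ and $p_2 \in \mathcal{C}_{k+1}(z_k') \cap B_n(q,\epsilon/2)$. Since $n \geq l_k$, the triangle inequality gives $d_{l_k}(p_1, p_2) \leq d_n(p_1,q) + d_n(q,p_2) < \epsilon$. Next, Lemma \ref{jicheng} (applied once, since $p_j$ descends from $z_k$ or $z_k'$) yields $d_{l_k}(z_k,p_1) < \epsilon_{k+1}$ and $d_{l_k}(z_k',p_2) < \epsilon_{k+1}$, so by a second triangle inequality
\[
d_{l_k}(z_k, z_k') \;\leq\; \epsilon_{k+1} + \epsilon + \epsilon_{k+1} \;<\; 3\epsilon.
\]

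Now I would locate the divergence level. Since $z_k \neq z_k'$ in $\mathcal{C}_k$, there is a smallest index $i \in \{1,\dots,k\}$ at which their ancestry codings differ, so the $i$-th ancestors $z_i$ and $z_i'$ are sisters (descending from a common mother in $\mathcal{C}_{i-1}$, or, if $i=1$, simply both in $\mathcal{C}_1$). Telescoping via Lemma \ref{jicheng} along the ancestry chain $z_i \leftarrow z_{i+1} \leftarrow \cdots \leftarrow z_k$ gives
\[
d_{l_i}(z_i, z_k) \;\leq\; \sum_{j=i+1}^{k} \epsilon_j \;\leq\; 2\epsilon_{i+1},
\]
and similarly $d_{l_i}(z_i', z_k') \leq 2\epsilon_{i+1}$. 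Combined with the bound $d_{l_i}(z_k,z_k')\leq d_{l_k}(z_k,z_k')<3\epsilon$ (since $l_i \leq l_k$), one more triangle inequality gives $d_{l_i}(z_i,z_i') < 4\epsilon_{i+1} + 3\epsilon < 5\epsilon$, contradicting Lemma \ref{jicheng2}, which asserts that $\mathcal{C}_i(z_{i-1}')$ (and likewise $\mathcal{C}_1$) is $(l_i, 5\epsilon)$-separated.

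The routine step is the bookkeeping of the telescoping geometric sum $\sum_{j>i} \epsilon_j = \sum_{j>i} \epsilon/2^j \leq \epsilon/2^i \leq 2\epsilon_{i+1}$; the only mild subtlety is the corner case $i=1$, where "sisters" should be interpreted as distinct elements of $\mathcal{C}_1$, which by construction (the $5\epsilon$-separation in Lemma \ref{jicheng2} applied at the top level, traceable back to the $(n_1,6\epsilon)$-separation of $\mathcal{S}_1$) are again $(l_1,5\epsilon)$-separated. The uniqueness of $z_k$ is then immediate from the contradiction just obtained.
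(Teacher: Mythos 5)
Your argument is correct and is essentially the paper's own proof: assume two distinct mothers have descendants in the ball, bound $d_{l_k}(z_k,z_k')$ by about $3\epsilon$ via the triangle inequality through points of the ball, pass to the first level $i$ where the ancestries diverge, telescope the $\epsilon_j$'s to get $d_{l_i}(z_i,z_i')<5\epsilon$, and contradict the $(l_i,5\epsilon)$-separation from Lemma \ref{jicheng2}. The only cosmetic differences are that the telescoping estimate you attribute to Lemma \ref{jicheng} is really Lemma \ref{shadowx} (or the construction directly), and your explicit treatment of the $i=1$ case is slightly more careful than the paper's.
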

For the fixed sequence $\{z_k\}_{k\geq 1}$ with $z_{k}\in \mathcal{C}_k$, we set
$z_k=z(\underline{x}_1, \underline{x}_2, \cdots \underline{x}_{k} ) \in \mathcal{C}_k$ with $x_{i}=(x^{i}_{1},x^{i}_{2},\cdots, x^{i}_{N_{i}})\in \mathcal{S}^{N_{i}}_{i}$ for each $1\leq i\leq k.$
Firstly, we estimate $\mathcal{L}_k(z_k)$ as follows.
Since $\mathcal{S}_k\subset Y_k$ for each $k\geq 1,$
\begin{align}\label{che1}
\begin{split}
\mathcal{L}_k(z_k)= &\prod_{i=1}^k\prod_{s=1}^{N_{i}}\exp S_{n_i}(1+f)(x^i_{s} )\\
 \leq & \exp \Big(\sum_{i=1}^k N_in_i(1+\int f d\mu+\eta)  \Big).
 \end{split}
\end{align}
On the other hands, we estimate $\kappa_k$ as follows. By  Proposition \ref{jicheng2},
\begin{align}\label{che2}
\begin{split}
\kappa_k= & M_1^{N_1}\cdots  M_k^{N_k}\\
\geq &\exp\Big( \sum_{i=1}^k N_in_i(h_{\mu}(T)+1+\int f d\mu -\eta)\Big)\\
\geq & \exp \Big( \sum_{i=1}^k N_in_i (C+1)(  1+\int f d\mu) -\sum_{i=1}^k N_in_i\eta \Big).
\end{split}
\end{align}
Let $j\in \{0, \cdots, N_{k+1}-1\}$ be the unique number such that

\noindent {\bf Case 1}  $l_k   \leq n\leq l_k+R_k+m_{k+1} .$

\noindent {\bf Case 2}  $l_k  +R_k+m_{k+1}+j(n_{k+1}+m_{k+1}) <n<l_k+R_k+m_{k+1}+(n_{k+1}+m_{k+1})(j+1).$

\noindent For {\bf Case 1}. By Lemma \ref{le1}, we have
$\nu_{k+1}(B_n(q,   \frac{\epsilon}{2} )) \leq \sum_{z\in \mathcal{C}_{k+1}(z_k)}\delta_{z}\mathcal{L}_{k+1}(z). $
Hence, by (\ref{che1}) and (\ref{che2}),
\begin{align*}
\begin{split}
 \mu_{k+1}(B_n(q,  \frac{\epsilon}{2} ))\leq&\frac{\sum_{z\in \mathcal{C}_{k+1}(z_k)}\delta_{z}\mathcal{L}_{k+1}(z)}{\kappa_{k+1}} \\
 \leq &\frac{\mathcal{L}_{k }(z_k) \prod_{s=1}^{N_{k+1}}\Big(\sum_{x_s^{k+1}\in \mathcal{S}_{k+1} }\exp S_{n_{k+1}}(1+f)(x_s^{k+1}) \Big)}{ M_1^{N_1}\cdots  M_k^{N_k}M_{k+1}^{N_{k+1}}}\\
 \leq &\frac{\mathcal{L}_{k }(z_k)   }{ M_1^{N_1}\cdots  M_k^{N_k} }\\
 \leq & \exp \Big( -\sum_{i=1}^k N_in_i  C(  1+\int f d\mu)  +2\sum_{i=1}^k N_in_i\eta  \Big).
 \end{split}
 \end{align*}
Moreover,  for any $p>1$, we have
$\nu_{k+p}(B_n(q, \frac{\epsilon}{2})) \leq \sum_{z\in \mathcal{C}_{k+p}(z_k)}\delta_{z}\mathcal{L}_{k+p}(z). $
 Then
\begin{align*}
\begin{split}
 &\mu_{k+p}(B_n(q,  \frac{\epsilon}{2} ))\\
 \leq&\frac{\sum_{z\in \mathcal{C}_{k+p}(z_k)}\delta_{z}\mathcal{L}_{k+p}(z)}{\kappa_{k+p}} \\
 \leq &\frac{\mathcal{L}_{k }(z_k) \prod_{j=1}^p\prod_{s=1}^{N_{k+j}}\Big(\sum_{x_s^{k+j}\in \mathcal{S}_{k+j} }\exp S_{n_{k+j}}(1+f)(x_s^{k+j}) \Big)}{ M_1^{N_1}\cdots  M_k^{N_k}M_{k+1}^{N_{k+1}}\cdots M_{k+p}^{N_{k+p}}}\\
 \leq &\frac{\mathcal{L}_{k }(z_k)   }{ M_1^{N_1}\cdots  M_k^{N_k} }\\
 \leq & \exp \Big( -\sum_{i=1}^k N_in_i  C(  1+\int f d\mu)  +2\sum_{i=1}^k N_in_i\eta  \Big).
 \end{split}
 \end{align*}
 Since $x_{j}^i \in Y_i$, for each $1\leq i\leq k $ and $1\leq j\leq N_i$,
 we continue the estimates
 \begin{align*}
 \begin{split}
 & \exp \Big( -\sum_{i=1}^k N_in_i  C(  1+\int f d\mu)  +2\sum_{i=1}^k N_in_i\eta  \Big)\\
 \leq & \exp \Big( -C\sum_{i=1}^k (N_i n_i+\sum_{j=1}^{N_i}S_{n_i}f(x^i_j )-N_in_i\eta  )   +2\sum_{i=1}^k N_in_i\eta  \Big) \\
= & \exp \Big( -C\sum_{i=1}^k\big( N_i  n_i +\sum_{j=1}^{N_i}S_{n}f(x^i_j) \big)   +(2+C)\sum_{i=1}^k N_in_i\eta  \Big).\\
\end{split}
\end{align*}
Moreover, by  Proposition \ref{rkupp}  and (\ref{ll})
\begin{align*}
\begin{split}
&\exp \Big( -C\sum_{i=1}^k\big( N_i  n_i +\sum_{j=1}^{N_i}S_{n}f(x^i_j) \big)   +(2+C)\sum_{i=1}^k N_in_i\eta  \Big)\\
  \leq & \exp \Big( -C\sum_{i=1}^k( N_i  n_i +R_i )  +C\sum_{i=1}^kl_i\eta +(2+C)\sum_{i=1}^k N_in_i\eta  \Big)   \\
  \leq & \exp \Big( -C\sum_{i=1}^k( N_i  n_i +R_i )  +2Cl_k\eta +(2+C)\sum_{i=1}^k N_in_i\eta  \Big)\\
    \leq & \exp \Big( -C\sum_{i=1}^k( N_i  n_i +R_i )  +2Cn\eta +(2+C)n\eta  \Big)\\
    = & \exp \Big( -C\sum_{i=1}^k( N_i  n_i +R_i )    +(2+3C)n\eta  \Big).
 \end{split}
 \end{align*}
Since
 \begin{align*}
\lim\limits_{k\to\infty}\dfrac{\sum_{i=1}^k( N_i  n_i +R_i )}{l_k+R_k+m_{k+1}} \to 1,
\end{align*}
 we have
  \begin{align*}
 \begin{split}
 -C\sum_{i=1}^k( N_i  n_i +R_i )    +(2+3C)n\eta   \leq&   -(C-\eta)(l_k+R_k+m_{k+1})    +(2+3C)n\eta   \\
 \leq&    -(C-\eta)n    +(2+3C)n\eta   \\
 =& -n(C-3\eta-3C\eta).
 \end{split}
 \end{align*}
 Hence,
 \begin{align} \label{cr2}
 \limsup\limits_{k\to\infty}\mu_k(B_n(q, \frac{\epsilon}{2} ))\leq \exp \Big( -n(C-3\eta-3C\eta) \Big)
 \end{align}
 for $n$ large enough.

\noindent For {\bf Case 2}. We need a families of lemmas to estimate the measure.
\begin{lem}
Suppose $\mu_{k+1}(B_n(q,  \frac{\epsilon}{2} ))>0$, then there exists (a unique choice of ) $z_k\in \mathcal{C}_k$ and $ (x_1, \cdots x_j)\in  \mathcal{S}_{k+1}^{j}  $ satisfying
$B_n(q,  \frac{\epsilon}{2} ) \subset \mathcal{A}_{z_{k}; x_1, \cdots x_j}$ where
$$\mathcal{A}_{z_k; x_1, \cdots x_j}:=\{y(z_k;s_1, \cdots  s_j,\cdots s_{N_{k+1}}  )\in\mathcal{C}_{k+1}: s_1=x_1, \cdots  s_j=x_j\} $$
and $y(z_k;s_1, \cdots  s_j,\cdots s_{N_{k+1}}  )$ denotes the point in $\mathcal{C}_{k+1}$ descends from $z_k$ and obtained by $(s_1, \cdots  s_j,\cdots s_{N_{k+1}}).$
Furthermore,
$$\nu_{k+1}(B_n(q,  \frac{\epsilon}{2} ))\leq  \mathcal{L}_k(z_k)\prod_{l=1}^j \exp S_{n_{k+1}}(f+1)(x_{ l}^{k+1})M_{k+1}^{N_{k+1}-j}.$$
\begin{proof}
If $\mu_{k+1}(B_{n}(q, \frac{\epsilon}{2}))>0$, then $\mathcal{C}_{k+1}\cap B_{n}(q, \frac{\epsilon}{2})\neq \emptyset.$ Let $z=y(z_k; x_1, \cdots, x_{N_{k+1}})\in \mathcal{C}_{k+1}\cap B_{n}(q, \frac{\epsilon}{2})$ where $z_k\in \mathcal{C}_{k }$ and $(x_1, \cdots, x_{N_{k+1}})\in \mathcal{S}_{k+1} ^{N_{k+1}}.$ Let
$$\mathcal{A}_{z_k; x_1, \cdots x_j}:=\{y(z_k;s_1, \cdots  s_j,\cdots s_{N_{k+1}}  )\in\mathcal{C}_{k+1}: s_1=x_1, \cdots  s_j=x_j\}.$$
Suppose that $z(x^{'}; a)\in B_{n}(q, \frac{\epsilon}{2})$ with $x^{'}\neq z_{k}, x^{'}\in\mathcal{C}_{k}.$ By the construction, their exists $1\leq i\leq k,$ such that $z_{i},x_{i}'\in\mathcal{C}_{i}$ are sisters and $z_{k}$ descends from $z_{i},x'$ descends from $x_{i}'$. We observe that
 \begin{align*}
\begin{split}
d_{l_{i}}(z_{i},x_{i}')\leq &d_{l_i}(z_{i},z_{i+1})+\cdots+d_{l_i}(z_{k-1},z_{k})\\
&+d_{l_i}(z_{k},q)+d_{l_i}(q,x')+d_{l_i}(x',x_{k-1}')+\cdots+d_{l_i}(x_{i+1}',x_{i}')\\
\leq&\epsilon_{i+1}+\cdots+\epsilon_k+\frac{\epsilon}{2} +\frac{\epsilon}{2}+\epsilon_k+\cdots+\epsilon_{i+1}\\
\leq & 2\epsilon_{i+1}+\epsilon+2\epsilon_{i+1}\\
\leq &3\epsilon,
\end{split}
\end{align*} which contract the fact that $\mathcal{C}_{i}(z_{i-1})$ is $(l_{i},5\epsilon)$ separated set in Lemma \ref{jicheng2}. So
$z_k=x^{'}.$
For $s\in \{1, 2,\cdots, j\}, $ we have
 \begin{align*}
 \begin{split}
 &d_{n_k+1}(T^{l_k+R_k+m_{k+1}+(s-1)(n_{k+1}+m_{k+1})}q, x_{ s}^{k+1}) \\
 \leq &d_{n_k+1}(T^{l_k+R_k+m_{k+1}+(s-1)(n_{k+1}+m_{k+1})}q, T^{(s-1)(n_{k+1}+m_{k+1})}y(z_k; x_1, \cdots, x_{N_{k+1}}))\\
 &+d(T^{(s-1)(n_{k+1}+m_{k+1})}y(z_k; x_1, \cdots, x_{N_{k+1}}), x_{ s}^{k+1})\leq \frac{\epsilon}{2}+\epsilon_{k+1}<2\epsilon.
 \end{split}
 \end{align*}
Since $x_{ s}^{k+1}\in \mathcal{S}_{k+1}$ and  $\mathcal{S}_{k+1}$ is $(n_{k+1}, 6\epsilon)$ separated set, if follows that $s_1=x_1, \cdots, s_j=x_j.$ Then, $B_n(q,  \frac{\epsilon}{2} ) \subset \mathcal{A}_{z_k; x_1, \cdots x_j}$. So
 \begin{align*}
 \begin{split}
 &\nu_{k+1}(B_n(q,  \frac{\epsilon}{2} ))\\
 \leq&\sum_{z\in \mathcal{A}_{z_k; x_1, \cdots x_j}}\mathcal{L}_{k+1}(z)\\
 =& \mathcal{L}_k(z_k)\sum_{\underline{x}_{k+1}: s_1 =x_1, \cdots, s_j =x_j}\mathcal{L}(\underline{z}_{k+1})\\
 \leq &  \mathcal{L}_k(z_k)\prod_{l=1}^j \exp S_{n_{k+1}}(f+1)(x_{ l}^{k+1})M_{k+1}^{N_{k+1}-j}. 
 \end{split}
 \end{align*}
\end{proof}
 \end{lem}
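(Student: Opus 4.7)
The plan is to show that $B_n(q,\epsilon/2)\cap\mathcal{C}_{k+1}$ lies inside a single ``cell'' $\mathcal{A}_{z_k;x_1,\ldots,x_j}$, and then to evaluate $\nu_{k+1}$ on that cell using the product form of $\mathcal{L}_{k+1}$. Since $\mu_{k+1}(B_n(q,\epsilon/2))>0$, I pick any $z=y(z_k;x_1,\ldots,x_{N_{k+1}})\in\mathcal{C}_{k+1}\cap B_n(q,\epsilon/2)$, which identifies a candidate ancestor $z_k$ and candidate coordinates. Uniqueness of $z_k$ follows by the same sister-chain argument used in Lemma \ref{le1}: if some other point of the ball descended from $z_k'\neq z_k$, the triangle inequality in the Bowen metric $d_{l_k}$ would force $d_{l_k}(z_k,z_k')<3\epsilon$, and propagating this back through the common ancestor where $z_k$ and $z_k'$ first split would contradict the $(l_i,5\epsilon)$-separation of $\mathcal{C}_i(z_{i-1})$ supplied by Lemma \ref{jicheng2}.

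Next I would use the Case~2 constraint $n>l_k+R_k+m_{k+1}+j(n_{k+1}+m_{k+1})$ to pin the first $j$ coordinates. Set $t_s:=l_k+R_k+m_{k+1}+(s-1)(n_{k+1}+m_{k+1})$ for $s=1,\ldots,j$; by construction the block $[t_s,t_s+n_{k+1})$ sits inside $[0,n)$. For any $z''=y(z_k;s_1,\ldots,s_{N_{k+1}})\in B_n(q,\epsilon/2)\cap\mathcal{C}_{k+1}$, the shadowing built into the construction yields $d_{n_{k+1}}(T^{t_s}z,x_s)\leq\epsilon_{k+1}$ and $d_{n_{k+1}}(T^{t_s}z'',s_s)\leq\epsilon_{k+1}$, while the Bowen ball condition gives $d_{n_{k+1}}(T^{t_s}z,T^{t_s}q)\leq\epsilon/2$ and $d_{n_{k+1}}(T^{t_s}q,T^{t_s}z'')\leq\epsilon/2$. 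Chaining these four estimates delivers $d_{n_{k+1}}(x_s,s_s)<\epsilon+2\epsilon_{k+1}<2\epsilon$, which forces $s_s=x_s$ because $\mathcal{S}_{k+1}$ is $(n_{k+1},6\epsilon)$-separated. Hence $B_n(q,\epsilon/2)\cap\mathcal{C}_{k+1}\subset\mathcal{A}_{z_k;x_1,\ldots,x_j}$.

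For the measure bound I then write
\[
\nu_{k+1}(B_n(q,\epsilon/2))\leq\sum_{z\in\mathcal{A}_{z_k;x_1,\ldots,x_j}}\mathcal{L}_{k+1}(z)
\]
and exploit the factorisation $\mathcal{L}_{k+1}(y(z_k;s_1,\ldots,s_{N_{k+1}}))=\mathcal{L}_k(z_k)\prod_{s=1}^{N_{k+1}}\exp S_{n_{k+1}}(1+f)(s_s)$. Freezing $s_1=x_1,\ldots,s_j=x_j$ and letting $s_{j+1},\ldots,s_{N_{k+1}}$ range independently over $\mathcal{S}_{k+1}$ decouples the sum into a product: the frozen coordinates contribute $\prod_{l=1}^{j}\exp S_{n_{k+1}}(f+1)(x_l^{k+1})$, and each of the $N_{k+1}-j$ free coordinates contributes a factor $M_{k+1}$ by the definition recorded in Proposition \ref{propk}. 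This yields exactly the claimed upper bound.

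The main obstacle is the bookkeeping in the triangle inequality: the radius $\epsilon/2$ of the Bowen ball plus twice the shadowing slack $\epsilon_{k+1}$ must remain strictly below the $(n_{k+1},6\epsilon)$-separation threshold of $\mathcal{S}_{k+1}$, and this is exactly what the geometric decay $\epsilon_k=\epsilon/2^k$ affords once $k$ is moderately large.
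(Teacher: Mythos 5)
Your proposal is correct and follows essentially the same route as the paper: identify the unique ancestor $z_k$ via the sister-chain triangle inequality contradicting the $(l_i,5\epsilon)$-separation from Lemma \ref{jicheng2}, pin the first $j$ coordinates by chaining the shadowing slack $\epsilon_{k+1}$ with the ball radius $\epsilon/2$ against the $(n_{k+1},6\epsilon)$-separation of $\mathcal{S}_{k+1}$, and then factor $\mathcal{L}_{k+1}$ over the frozen and free coordinates to obtain the bound $\mathcal{L}_k(z_k)\prod_{l=1}^{j}\exp S_{n_{k+1}}(f+1)(x_l^{k+1})\,M_{k+1}^{N_{k+1}-j}$. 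Your explicit remark that the blocks $[t_s,t_s+n_{k+1})$ lie inside $[0,n)$ under the Case 2 constraint, and your reading of the conclusion as an inclusion of $B_n(q,\epsilon/2)\cap\mathcal{C}_{k+1}$ into $\mathcal{A}_{z_k;x_1,\ldots,x_j}$, are the correct intended interpretations of what the paper states.
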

 Without losing generality, we generalise the above lemma as follows (without proof).
 \begin{lem}
 For any $p\geq 1$, suppose $\mu_{k+p}(B_{n}(q, \frac{\epsilon}{2}))>0.$ Let $z_k\in \mathcal{C}_k$ and $x_1, \cdots, x_j$ be as before. Then every $z\in \mathcal{C}_{k+p}\cap B_{n}(q, \frac{\epsilon}{2})$ descends from some point in $\mathcal{A}_{z_k, x_1, \cdots, x_j}.$ We have
 $$\nu_{k+p}(B_n(q, \frac{\epsilon}{2}))\leq  \mathcal{L}_k(z_k)\prod_{l=1}^j \exp S_{n_{k+1}}(f+1)(x_{i_l}^{k+1})M_{k+1}^{N_{k+1}-j}(M_{k+2})^{N_{k+2} }\cdots (M_{k+p})^{N_{k+p} }.$$
 \end{lem}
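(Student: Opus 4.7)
The plan is to reduce to the already-proved $p=1$ case by projecting any $z\in\mathcal{C}_{k+p}\cap B_n(q,\epsilon/2)$ down to its level-$(k+1)$ ancestor, and then to convert a weight sum at level $k+1$ into one at level $k+p$ using the product structure of $\mathcal{L}_{k+p}$.

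First I would verify the descent claim. Given $z=z(\underline{x}_1,\ldots,\underline{x}_{k+p})\in\mathcal{C}_{k+p}\cap B_n(q,\epsilon/2)$, set $z^{(k+1)}:=z(\underline{x}_1,\ldots,\underline{x}_{k+1})\in\mathcal{C}_{k+1}$. By Lemma \ref{shadowx} applied with $i=k+1$, one has $d_{l_{k+1}}(z,z^{(k+1)})<\epsilon_{k+1}$, and since $n<l_{k+1}$ this also gives $d_n(z,z^{(k+1)})<\epsilon_{k+1}$, so $z^{(k+1)}\in B_n(q,\epsilon/2+\epsilon_{k+1})$. The separation argument of the preceding ($p=1$) lemma now runs verbatim on $z^{(k+1)}$ with the tolerance $\epsilon/2$ replaced by $\epsilon/2+\epsilon_{k+1}$: the distance estimate that must contradict the $(l_i,5\epsilon)$-separation of $\mathcal{C}_i(z_{i-1})$ from Lemma \ref{jicheng2} becomes $3\epsilon+2\epsilon_{k+1}$, and the coordinate-matching estimate against the $(n_{k+1},6\epsilon)$-separated set $\mathcal{S}_{k+1}$ becomes $\epsilon/2+2\epsilon_{k+1}$. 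Both remain strictly below their thresholds because $\epsilon_{k+1}=\epsilon/2^{k+1}$ is geometrically small, forcing $z^{(k+1)}\in\mathcal{A}_{z_k;x_1,\ldots,x_j}$. Hence $z$ descends from a point in $\mathcal{A}_{z_k;x_1,\ldots,x_j}$.

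Next I would exploit the product structure of the weights. If $z=z(\underline{x}_1,\ldots,\underline{x}_{k+p})$ descends from $z^{(k+1)}=z(\underline{x}_1,\ldots,\underline{x}_{k+1})$, then
\[\mathcal{L}_{k+p}(z)=\mathcal{L}_{k+1}(z^{(k+1)})\cdot\mathcal{L}(\underline{x}_{k+2})\cdots\mathcal{L}(\underline{x}_{k+p}).\]
Summing first over the descendants of a fixed $z^{(k+1)}$ in $\mathcal{C}_{k+p}$ produces the factor $M_{k+2}^{N_{k+2}}\cdots M_{k+p}^{N_{k+p}}$, exactly as in the earlier computation of $\kappa_k$. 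Summing next over $z^{(k+1)}\in\mathcal{A}_{z_k;x_1,\ldots,x_j}$ is precisely the sum already carried out in the $p=1$ lemma, and yields $\mathcal{L}_k(z_k)\prod_{l=1}^{j}\exp S_{n_{k+1}}(f+1)(x_l^{k+1})\cdot M_{k+1}^{N_{k+1}-j}$. Multiplying the two factors gives the stated inequality.

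The only mild subtlety is tracking the extra error $\epsilon_{k+1}$ picked up by projecting to the level-$(k+1)$ ancestor; because the separation thresholds $5\epsilon$ and $6\epsilon$ are fixed while $\epsilon_{k+1}\to 0$, every comparison used in the $p=1$ argument still strictly holds. Beyond this bookkeeping and the one-line factorisation of $\mathcal{L}_{k+p}$, no new idea is required.
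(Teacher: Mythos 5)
Your proof is correct, and it is precisely the extension the paper intends but leaves out (the lemma is stated ``without proof''): project each $z\in\mathcal{C}_{k+p}\cap B_n(q,\tfrac{\epsilon}{2})$ to its level-$(k+1)$ ancestor via Lemma \ref{shadowx}, rerun the $p=1$ separation argument with the harmless extra $\epsilon_{k+1}$ slack against the $5\epsilon$- and $6\epsilon$-separation thresholds, and then sum out the deeper levels using $\mathcal{L}_{k+p}(z)=\mathcal{L}_{k+1}(z^{(k+1)})\mathcal{L}(\underline{x}_{k+2})\cdots\mathcal{L}(\underline{x}_{k+p})$, which yields the factor $M_{k+2}^{N_{k+2}}\cdots M_{k+p}^{N_{k+p}}$ on top of the $p=1$ bound. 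I see no gaps in this argument.
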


 \begin{lem}
 For sufficiently large $n$,
 $$\limsup\limits_{k\to\infty}\mu_k(B_n(q, \frac{\epsilon}{2} ))\leq   \exp\Bigg(-n\Big(C-4\eta-3C\eta\Big)\Bigg).$$
   \begin{proof}
By the construction, $M_k\geq  \exp\Bigg(n_k\Big(  \int (f+1) d\mu+h_{\mu}(T)-\eta\Big)\Bigg),$
\begin{eqnarray*}
 % \nonumber to remove numbering (before each equation)
 &~& \kappa_k(M_{k+1})^{j}\\
   &= & M_1^{N_1}\cdots M_{k}^{N_k}M_{k+1}^{j}\\
   &=& \exp\Bigg( \Big( (C+1)(\int  f   d\mu+1) -\eta\Big) \Big(\sum_{i=1}^kN_in_i+jn_{k+1}\Big)\Bigg).
 \end{eqnarray*}
Accordingly,  similar to (\ref{che1}), we have
\begin{align}
\begin{split}
\mathcal{L}_k(z_k)\prod_{l=1}^j \exp S_{n_{k+1}}(f+1)(x_{ l}^{k+1})\leq & \exp \Bigg(\Big(\sum_{i=1}^k N_in_i+jn_{k+1}\Big)\Big(1+\int f d\mu+\eta\Big)  \Bigg).
\end{split}
\end{align}
Hence,
 \begin{align*}
\begin{split}
 &\mu_{k+p}(B_n(q,  \frac{\epsilon}{2} ))\\
 =&\frac{\nu_{k+p}(B_n(q,  \frac{\epsilon}{2} ))}{\kappa_{k+p}}\\
\leq&\frac{\mathcal{L}_k(z_k)\prod_{l=1}^j \exp S_{n_{k+1}}(f+1)(x_{ l}^{k+1})M_{k+1}^{N_{k+1}-j}(M_{k+2})^{N_{k+2} }\cdots (M_{k+p})^{N_{k+p} }}{M_1^{N_1}\cdots  M_k^{N_k}M_{k+1}^{N_{k+1}}\cdots M_{k+p}^{N_{k+p}}} \\
 =&\frac{\mathcal{L}_k(z_k)\prod_{l=1}^j \exp S_{n_{k+1}}(f+1)(x_{ l}^{k+1})  } {M_1^{N_1}\cdots  M_k^{N_k}M_{k+1}^{j} }\\
 \leq & \frac{\exp \Big((\sum_{i=1}^k N_in_i+jn_{k+1})(1+\int f d\mu+\eta)  \Big)}{ \exp\Big( ( (C+1)(\int  f   d\mu+1) -\eta) (\sum_{i=1}^kN_in_i+jn_{k+1})\Big)} \\
 \leq & \exp\Bigg(-\Big(\sum_{i=1}^k N_in_i+jn_{k+1}\Big)\Big( C (\int  f   d\mu+1)-2\eta\Big) \Bigg)\\
 \leq & \exp\Bigg(-C\Big(\sum_{i=1}^k N_in_i+jn_{k+1}\Big) \Big(  \int  f   d\mu+1  \Big) +2 n\eta\Bigg)\\
 =&\exp\Bigg(-C\Big(\sum_{i=1}^k N_in_i+jn_{k+1}+(\sum_{i=1}^k N_in_i+jn_{k+1})\int f d\mu\Big)+2 n\eta\Bigg).
 \end{split}
 \end{align*}

 Since $x^{i}_{j}\in Y_{i},$ for each $1\leq i\leq k$ and $1\leq j \leq N_{i}$,
 \begin{align*}
\begin{split}
  & \exp\Bigg(-C\Big( \sum_{i=1}^k N_in_i+jn_{k+1} +(\sum_{i=1}^k N_in_i+jn_{k+1})\int f d\mu \Big)   +2n\eta\Bigg)\\
 \leq &\exp\Bigg(-C\Big( \sum_{i=1}^k N_in_i+jn_{k+1} + \sum_{i=1}^{k}\sum_{j=1}^{N_i}S_{n}f(x^{i}_{j})-jn_{k+1}\|f\|\Big)    +2n\eta+Cn\eta\Bigg)\\
  \leq &\exp\Bigg(-C\Big( \sum_{i=1}^k N_in_i+jn_{k+1} + \sum_{i=1}^{k}\sum_{j=1}^{N_i}S_{n}f(x^{i}_{j}) \Big)+C jn_{k+1} \|f\|   +2n\eta+Cn\eta\Bigg).\\
\end{split}
 \end{align*}
 By (\ref{ll1}), we have $\frac{\sum_{i=1}^k N_in_i+jn_{k+1} + \sum_{j=1}^{N_i}R_{i}}{n}\rightarrow1$ and $\frac{C jn_{k+1} \|f\|}{n}\rightarrow0$ as $n\rightarrow\infty$.

 Moreover, by Proposition (\ref{rkupp}) and (\ref{ll}),
 \begin{align*}
 \begin{split}
 &\exp\Bigg(-C\Big( \sum_{i=1}^k N_in_i+jn_{k+1} + \sum_{i=1}^{k}\sum_{j=1}^{N_i}S_{n}f(x^{i}_{j}) \Big)+C jn_{k+1}\|f\|   +2n\eta+Cn\eta\Bigg)\\
 \leq  &\exp\Bigg(-C\Big( \sum_{i=1}^k N_in_i+jn_{k+1} + \sum_{i=1}^{k}R_i - \sum_{i=1}^{k}l_i\eta\Big) +C jn_{k+1}\|f\|   +2n\eta+Cn\eta\Bigg)\\
 \leq &\exp\Bigg(-n(C-\eta)+2l_kC\eta +n\eta+2n\eta+Cn\eta   \Bigg)\\
 \leq  &\exp\Bigg(-n(C-\eta)+2Cn\eta+3n\eta +Cn\eta  \Bigg)\\
 = &\exp\Bigg(-n(C-\eta)+3n\eta+3Cn\eta   \Bigg)\\
 =  &\exp\Bigg(-n \Big( C -4 \eta -3C \eta  \Big)  \Bigg)
 \end{split}
 \end{align*}
for $n$ large enough. Furthermore,
\begin{align}\label{cr1}
\limsup\limits_{k\to\infty}\mu_k(B_n(q, \frac{\epsilon}{2} ))\leq   \exp\Bigg(-n \Big( C -4 \eta -3 C \eta  \Big)  \Bigg)
\end{align}
for $n$ large enough.
\end{proof}
\end{lem}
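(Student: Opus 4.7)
The plan is to combine the generalized upper bound on $\nu_{k+p}(B_n(q,\frac{\epsilon}{2}))$ from the preceding lemma with the factorization $\kappa_{k+p} = M_1^{N_1}\cdots M_{k+p}^{N_{k+p}}$, and then take $\limsup$ over the measure index. Dividing numerator and denominator, the factors $M_{k+2}^{N_{k+2}}\cdots M_{k+p}^{N_{k+p}}$ cancel completely, and $M_{k+1}^{N_{k+1}-j}$ cancels most of $M_{k+1}^{N_{k+1}}$, leaving only $M_{k+1}^{j}$ in the denominator. The problem therefore reduces to bounding
$$\mu_{k+p}\bigl(B_n(q,\tfrac{\epsilon}{2})\bigr) \leq \frac{\mathcal{L}_k(z_k)\,\prod_{l=1}^{j}\exp S_{n_{k+1}}(f+1)(x_l^{k+1})}{M_1^{N_1}\cdots M_k^{N_k}\,M_{k+1}^{j}}.$$

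The next step is to insert the quantitative estimates from the construction. Since $x_s^i\in Y_i$ and $x_l^{k+1}\in Y_{k+1}$, the numerator is bounded by $\exp\bigl((\sum_{i=1}^k N_i n_i + jn_{k+1})(1+\int f\,d\mu+\eta)\bigr)$, exactly as in (\ref{che1}). The lower bound $M_i\geq\exp(n_i(h_\mu(T)+1+\int f\,d\mu-\eta))$ from Proposition~\ref{propk}, together with the choice $h_\mu(T)\geq C(1+\int f\,d\mu)$, yields a matching lower bound on the denominator. Combining the two gives
$$\mu_{k+p}\bigl(B_n(q,\tfrac{\epsilon}{2})\bigr) \leq \exp\Bigl(-C\bigl(\textstyle\sum_{i=1}^k N_i n_i + jn_{k+1}\bigr)\bigl(1+\int f\,d\mu\bigr) + 2n\eta\Bigr).$$

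The remaining step is to convert $(\sum_{i=1}^k N_i n_i + jn_{k+1})(1+\int f\,d\mu)$ into $n$, with only an $O(n\eta)$ error. Here I would use $x_s^i\in Y_i$ in the opposite direction to replace each $n_i\int f\,d\mu$ by $S_{n_i}f(x_s^i)$, paying an $N_i n_i\eta$ slack, and do the same for the partial $(k{+}1)$-th block (whose extra $jn_{k+1}\|f\|$-type slack is negligible thanks to (\ref{ll1})). I would then invoke Proposition~\ref{rkupp} to upgrade $\sum_j S_{n_i}f(x_j^i)$ to $R_i$, with an $l_i\eta$ slack summable via (\ref{ll}). Finally, the approximate identity
$$\textstyle\sum_{i=1}^k (N_i n_i+R_i)+jn_{k+1}\approx l_k+R_k+m_{k+1}+jn_{k+1},$$
valid up to $m$-gap terms controlled by (\ref{et3}), together with the Case~2 inequality $n<l_k+R_k+m_{k+1}+(j+1)(n_{k+1}+m_{k+1})$, completes the conversion.

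The main obstacle is bookkeeping: four separate sources of $\eta$-error must be tracked (the upper $Y_i$-slack on $S_{n_i}f$, the corresponding lower slack, the $l_i\eta$ term from Proposition~\ref{rkupp}, and the $m_i$-gaps from (\ref{et3})), and each must be shown to contribute at most an $O(n\eta)$ correction \emph{uniformly in $p$}. The subtlety specific to Case~2 is that the bound must reflect only a partial $(k{+}1)$-th block of size $j$, which forces one to keep $M_{k+1}^{j}$ rather than the full $M_{k+1}^{N_{k+1}}$ in the denominator throughout; once this is handled, taking $\limsup$ in the measure index yields the claimed $\exp\bigl(-n(C-4\eta-3C\eta)\bigr)$.
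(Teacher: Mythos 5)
Your proposal follows essentially the same route as the paper: cancel the common factors $M_{k+2}^{N_{k+2}}\cdots M_{k+p}^{N_{k+p}}$ and all but $M_{k+1}^{j}$ of the $(k{+}1)$-block in $\nu_{k+p}/\kappa_{k+p}$, bound the numerator by the $Y_i$-type estimate as in (\ref{che1}) and the denominator from below via $M_i\geq\exp\bigl(n_i(h_\mu(T)+1+\int f\,d\mu-\eta)\bigr)$ with $h_\mu(T)\geq C(1+\int f\,d\mu)$, then trade $\int f\,d\mu$ back for Birkhoff sums, upgrade them to $R_i$ via Proposition~\ref{rkupp}, and compare the total length with $n$ using (\ref{ll}), (\ref{ll1}) and the Case~2 constraint. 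This matches the paper's argument step for step, with the same $O(n\eta)$ bookkeeping, so the proposal is correct.
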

Applying the Generalised Pressure Distribution Principle to (\ref{cr1}) and (\ref{cr2}), we have
 $$h_{top}^B(T,D_{f}^{x_{0}})\geq h_{top}^B(T,F)\geq P(0, \frac{\epsilon}{2})\geq \min\Big\{C-3\eta-3C\eta, C-4\eta - 3C \eta\Big\}.$$
Letting $\eta\to 0$, we finish the proof.

\section{Applications }
 Recall from \cite{mck}(Proposition 21.2) any topological mixing subshift of finite type satisfies
specification. So by Theorem \ref{main0} and Theorem \ref{main}, we have

\begin{thm}
 Let T be a topological mixing subshift of finite type. Then all results of
Theorem \ref{main0} and Theorem \ref{main} hold.
\end{thm}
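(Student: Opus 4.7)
The plan is essentially to observe that this theorem is an immediate corollary of Theorem \ref{main0} and Theorem \ref{main}, so no new dynamical construction is required. The only thing to verify is that the hypothesis of the main theorems, namely the specification property, is met by every topologically mixing subshift of finite type.

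First, I would invoke the classical result cited as \cite{mck} (Proposition 21.2), which states that any topologically mixing subshift of finite type $(X,T)$ satisfies the specification property: for every $\epsilon>0$ (equivalently, for every cylinder length, since the shift metric is adapted to cylinders) there is $m=m(\epsilon)$ such that any finite collection of orbit segments separated by gaps of length at least $m$ can be $\epsilon$-shadowed by a single orbit. This matches precisely the definition given in Section \ref{sep}.

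With specification established, I would then simply apply Theorem \ref{main0} to conclude
\[
h_{top}^B(\mathbf D^{x_0}_\alpha) = \frac{h_{top}(T)}{1+\alpha}
\]
for every $\alpha\in\mathbb R^+$ and every $x_0\in X$, and apply Theorem \ref{main} to conclude
\[
h_{top}^B(T,\mathbf D^{x_0}_f) = \sup\left\{\frac{h_\mu(T)}{1+\int f\,d\mu} : \mu\in M(X,T)\right\}
\]
for every positive continuous $f\in C(X)$ and every $x_0\in X$, with the supremum being the unique root of $P(-s(f+1))=0$.

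There is no real obstacle here: the entire content of the statement is that the standing hypothesis of the main theorems is satisfied in this particular class of systems. The only mild subtlety worth mentioning (though not strictly needed for the proof) is that both main theorems require $X$ to be a compact metric space and $T$ continuous, which are automatic for a subshift of finite type with the usual product metric on a finite alphabet. Thus the proof is one short paragraph invoking \cite{mck} and then citing Theorems \ref{main0} and \ref{main}.
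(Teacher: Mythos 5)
Your proposal is correct and coincides with the paper's own argument: the paper likewise just cites \cite{mck} (Proposition 21.2) to get the specification property for topologically mixing subshifts of finite type and then applies Theorems \ref{main0} and \ref{main} directly. Nothing further is needed.
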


Now we present the full symbolic shifts as follows.
\begin{Example}
Given an integer $k>1$; consider the set $\Sigma^{+}_{k}=\{1,\cdots,k\}^{N} $ of sequences
$$\omega=(i_{1}(\omega)i_{2}(\omega)\cdots),$$
where $i_{n}(\omega)\in\{1,\cdots,k\}.$  The shift map $\sigma:\Sigma^{+}_{k}\rightarrow\Sigma^{+}_{k}$ is defined by
$$\sigma(\omega)=(i_{2}(\omega)i_{3}(\omega)\cdots).$$
We denote the symbolic systems by $(\Sigma^{+}_{k},\sigma)$. Now we introduce a distance and thus also
a topology on $\Sigma^{+}_{k}$. Given $\beta>1$, for each $\omega,\omega'\in\Sigma^{+}_{k}$, let
\begin{equation*}
 d(\omega,\omega')=
\begin{cases}\beta^{-|\omega\wedge\omega'|},  &\mathrm{if}~ \omega\neq\omega'; \\[5pt] 0, &\mathrm{if}~\omega=\omega',\\
\end{cases}
\end{equation*}
where $|\omega\wedge\omega'|\in\mathbb{N}$ is the largest positive integer such that $i_{n}(\omega)=i_{n}(\omega'),$ and $|\omega\wedge\omega'|= 0$ if $i_{n}(\omega)\neq i_{n}(\omega')$. Since $\sigma:\Sigma^{+}_{k}\rightarrow\Sigma^{+}_{k}$ is a continuous map of a compact metric space, the topological entropy is well defined. It is well known that $h((\Sigma^{+}_{k},\sigma))=\log k.$ For each $v\in \Sigma^{+}_{k},$  define
\begin{align*}
 \begin{split}
 D^{\alpha}_{v}:&=\{\omega\in\Sigma^{+}_{k}:\limsup_{n\rightarrow\infty}\frac{|\sigma^{n}\omega\wedge v|}{n}\geq\alpha\}\\
  &=\{\omega\in\Sigma^{+}_{k}:\limsup_{n\rightarrow\infty}\frac{-\log_{\beta}d(\sigma^{n}\omega,v)}{n}\geq\alpha\}\\
 &=\{\omega\in\Sigma^{+}_{k}:d(\sigma^{n}\omega,v)\leq M\beta^{-n\alpha} ~for ~any~ constant~ M>1~ and ~infinitely~ n\in\mathbb{N} \}.
 \end{split}
 \end{align*}
By Theorem \ref{main}, we have
$$ h^{B}_{top}(\sigma, D^{\alpha}_{v})=\frac{h_{top}(\sigma)}{1+\alpha}. $$
In the other hands, we have the Hausdorff dimension $dim_{H}(Z)=\frac{h^{B}_{top}(\sigma,Z)}{\log \beta}$ for any
$Z\subset\Sigma^{+}_{k}.$ Moreover, we have
$$dim_{H}(\sigma,D^{\alpha}_{v})=\frac{\log_{\beta}k}
{1+\alpha}.$$
\end{Example}
From the classical uniform hyperbolicity theory, every subsystem restricted on a topological mixing locally maximal hyperbolic set (called basic set or elementary set) satisfies specification property (for example, see \cite{kg}). So by Theorem \ref{main0} and Theorem \ref{main}, we have
\begin{thm}
  Let $f:M \rightarrow M$ be a $C^{1}$ diffeomorphism of a compact Riemannian manifold $M$. Let $T$ be a subsystem restricted on a topological mixing locally maximal hyperbolic set. Then all results of Theorem \ref{main0} and Theorem \ref{main} hold.
  \end{thm}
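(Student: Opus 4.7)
The plan is to reduce the statement to the abstract setting of Theorem \ref{main0} and Theorem \ref{main} by verifying their two structural hypotheses for the subsystem $(\Lambda,T)$, where $\Lambda$ denotes the topologically mixing locally maximal hyperbolic set on which $T=f|_{\Lambda}$ acts: namely, that $T$ is a continuous self-map of a compact metric space, and that $T$ has the specification property. Once both are in place, the conclusions are direct quotations of those theorems.

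First I would set up the ambient framework. A locally maximal hyperbolic set $\Lambda\subset M$ is by definition a compact $f$-invariant subset of $M$, so equipping it with the metric $d$ inherited from the Riemannian distance on $M$ makes $(\Lambda,d)$ a compact metric space. Because $f$ is continuous (indeed $C^1$) on $M$ and $\Lambda$ is $f$-invariant, the restriction $T=f|_{\Lambda}\colon\Lambda\to\Lambda$ is a continuous map. This places $(\Lambda,d,T)$ precisely in the topological-dynamical-system setting assumed at the start of Section \ref{sep}.

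Next I would supply the specification property. This is the classical theorem of Bowen, as recorded in the reference \cite{kg} already cited in the excerpt: every topologically mixing locally maximal hyperbolic set (a mixing basic set) of a $C^{1}$ diffeomorphism carries the specification property. Sketching the argument one would cite: given $\epsilon>0$, the Shadowing Lemma on $\Lambda$ supplies a $\delta=\delta(\epsilon)>0$ such that every $\delta$-pseudo-orbit in $\Lambda$ is $\epsilon$-shadowed by a genuine orbit; topological mixing then supplies a uniform transition time $m=m(\epsilon)$ within which any two points of $\Lambda$ can be linked by a finite orbit segment up to accuracy $\delta$. Concatenating the prescribed orbit pieces $\{T^{p}x_{j}:0\le p\le b_{j}-a_{j}\}$ with suitable transition pieces of length at least $m$ produces a $\delta$-pseudo-orbit whose $\epsilon$-shadow is exactly the point $x$ demanded by the definition of specification.

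With continuity, compactness, and specification of $T\colon\Lambda\to\Lambda$ verified, both Theorem \ref{main0} and Theorem \ref{main} apply verbatim to $(\Lambda,d,T)$, yielding the claimed equalities for $h_{top}^{B}(T,\mathbf{D}^{x_{0}}_{\alpha})$ and $h_{top}^{B}(T,\mathbf{D}^{x_{0}}_{f})$ for every $x_{0}\in\Lambda$, every $\alpha\in\mathbb{R}^{+}$, and every positive continuous $f:\Lambda\to\mathbb{R}$. The only nontrivial ingredient is the appeal to Bowen's specification theorem for mixing basic sets; since the paper already cites \cite{kg} for exactly this fact, the plan is to invoke the reference rather than reprove the shadowing-plus-mixing construction. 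Accordingly the statement is a direct corollary of the two main theorems, with no genuine analytic obstacle beyond this hypothesis check.
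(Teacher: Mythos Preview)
Your proposal is correct and matches the paper's approach exactly: the paper states this theorem immediately after observing (with reference to \cite{kg}) that a subsystem on a topologically mixing locally maximal hyperbolic set satisfies the specification property, and offers no further proof. Your additional sketch of the shadowing-plus-mixing mechanism behind specification is more detail than the paper itself provides.
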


It is well known that any factor of a topological mixing subshift of finite type has the specification property  and thus our main  theorem applies. Now we give an example of non-uniform hyperbolicity systems as follows.
\begin{Example}{\rm (\cite{Tho1})}
	Fix $I=[0,1]$ and $\alpha\in(0,1)$. The {\bf Manneville-Pomeau }family of maps are given by
	$$f_{\alpha}: I\rightarrow I,  f_{\alpha}(x)=x+x^{1+\alpha}\mod 1.$$
	Considered as a map of $S^{1}$, $f_{\alpha}$ is continuous. Since $f_{\alpha}'(0)=1,$ the system is not uniformly hyperbolic. However, since the Manneville-Pomeau maps are all topologically conjugate to a full shift on two symbols, they satisfy the specification property.
\end{Example}

\end{document}